\documentclass[amssymb,amstex,amsmath,11pt]{article}
\usepackage[margin=1.4in]{geometry}
\usepackage{amssymb,amsmath}              % See geometry.pdf to learn the layout options. There are lots.
\usepackage{amsmath,amssymb,latexsym,amsfonts,url}
\usepackage{graphicx}
\usepackage{amsthm}
\usepackage{amssymb}
\usepackage{color}
\usepackage{epstopdf}
\DeclareGraphicsRule{.tif}{png}{.png}{`convert #1 `dirname #1`/`basename #1 .tif`.png}
\renewcommand{\epsilon}{\varepsilon}

\newtheorem{theorem}{Theorem}[section]
\newtheorem{lemma}[theorem]{Lemma}

\newtheorem{thm}[theorem]{Theorem}

\newtheorem*{theorem*}{Theorem}

%%%%%%%%%%%%%%%%%%%

% Blackboard bold math alphabets
\def\liml{\lim\limits}
\def\suml{\sum\limits}
\def\intl{\int\limits}
%%%%%%%%%%%%%%%

\def\eps{\varepsilon}
\def\over{\overline}

\def\intl{\int\limits}
\def\suml{\sum\limits}

\def\({\left(}
\def\){\right)}

% ===============
% M彋 s?m爌 t泄

\def\suml{\sum\limits}

\def\liml{\lim\limits}
\def\intl{\int\limits}

\def\grad{\triangledown}
\newcommand{\Ric}{{\rm Ric}}
%%%%%%%%%%%%%%%%%%%%%%%%%%%%%%%%%%%%%%%%%%%%%

%\newenvironment{proof}{\smallskip\noindent{\it Proof.}\hskip \labelsep}
 %                         {\hfill\penalty10000\raisebox{-.09em}{$\Box$}\par\medskip}

%\title{Brief Article}
%\author{The Author}
%\date{}                                           % Activate to display a given date or no date

\begin{document}
\begin{title}
{Stable minimal hypersurfaces in a Riemannian manifold with pinched negative sectional curvature}
\end{title}
\begin{author}{Nguyen Thac Dung~\thanks{The first author was partially supported by NAFOSTED.} \and Keomkyo Seo~\thanks{The second author was supported by Basic Science Research Program through the
National Research Foundation of Korea(NRF) funded by the Ministry of
Education, Science and Technology(20110005520).}}\end{author}

\date{\today}

\maketitle

%\vspace{-1cm}
\begin{abstract}
We give an estimate of the first eigenvalue of the Laplace operator on a complete noncompact stable minimal hypersurface $M$ in a complete simply connected Riemannian manifold with pinched negative sectional curvature. In the same ambient space, we prove that if a complete minimal hypersurface $M$ has sufficiently small total scalar curvature then $M$ has only one end. We also obtain a vanishing theorem for $L^2$ harmonic $1$-forms on minimal hypersurfaces in a Riemannian manifold with sectional curvature bounded below by a negative constant. Moreover we provide sufficient conditions for a minimal hypersurface in a Riemannian manifold with nonpositive sectional curvature to be stable.\\

\noindent {\it Mathematics Subject Classification(2000)} : 53C42, 58C40. \\
\noindent {\it Key words and phrases} : minimal hypersurface, stability, first eigenvalue.

\end{abstract}

%%%%%%%%%%%%%%%%%%%%%%%%%%%%%%%%%%%%%%%%%%%%%%%%
\section{Introduction}
%%%%%%%%%%%%%%%%%%%%%%%%%%%%%%%%%%%%%%%%%%%%%%%%

Let $M$ be a complete noncompact Riemannian manifold and let $\Omega$ be a compact domain in $M$. Let $\lambda_1 (\Omega)>0$ denote the first eigenvalue of the Dirichlet boundary value problem
$$\left\{\begin{array}{llll} \Delta f + \lambda f = 0&\mbox{\ in\ } \mbox{\ $\Omega$\ } \\
 \ f=0 &\mbox{\ on\ } \mbox{\ $\partial \Omega$ } \end{array} \right. $$
where $\Delta$ denotes the Laplace operator on $M$. Then the first eigenvalue $\lambda_1 (M)$ is defined by
$$\lambda_1 (M)= \inf_\Omega \lambda_1 (\Omega),$$
where the infimum is taken over all compact domains in $M$.

In this article we shall find an esimate of the first eigenvalue of the Laplace operator on minimal hypersurfaces. Let us briefly introduce previous results in this direction. In \cite{CL}, Cheung and Leung obtained the first eigenvalue estimate for a complete noncompact submanifold with bounded mean curvature in hyperbolic space. In particular, they proved that for a $n$-dimensional complete minimal submanifold $M$ in the $m$-dimensional hyperbolic space $\mathbb{H}^m$
\begin{eqnarray*}
\frac{1}{4}(n-1)^2 \leq \lambda_1 (M) .
\end{eqnarray*}
Here this inequality is sharp because equality holds when $M$ is
totally geodesic (\cite{McKean}). Bessa and Montenegro \cite{BM} extended this result to complete noncompact submanifolds in a complete simply connected Riemannian manifold with sectional curvature bounded above by a negative constant. Indeed they proved

\begin{theorem*}[\cite{BM}]
Let $N$ be an $n$-dimensional complete simply connected Riemannian manifold with sectional curvature $K_N$ satisfying $K_N \leq -a^2 < 0$ for a positive constant $a>0$. Let $M$ be a an $m$-dimensional complete noncompact submanifold with bounded mean curvature $H$ in $N$ satisfying $|H| \leq b < (m-1)a$. Then
\begin{eqnarray*}
\frac{[(m-1)a-b]^2}{4} \leq \lambda_1 (M) .
\end{eqnarray*}
\end{theorem*}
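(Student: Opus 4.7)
The strategy is to combine Barta's variational principle with the Hessian comparison theorem for the distance function in a Hadamard manifold. Fix a basepoint $x_0 \in N$ and let $r(\cdot) = d_N(x_0, \cdot)$; since $N$ is simply connected with $K_N \leq -a^2 < 0$, the cut locus of $x_0$ is empty and $r$ is smooth on $N \setminus \{x_0\}$. The plan is to test Barta's inequality
$$\lambda_1(M) \;\geq\; \inf_M \frac{-\Delta_M f}{f}$$
against the trial function $f = e^{-\alpha r|_M}$, where $\alpha > 0$ is a parameter to be optimized at the end.

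The main step is a pointwise lower bound on $\Delta_M r$. Using the standard formula
$$\Delta_M r \;=\; \sum_{i=1}^{m} \mathrm{Hess}^N r(e_i,e_i) \;+\; \langle \nabla^N r,\, H \rangle$$
for a local orthonormal frame $\{e_i\}$ of $TM$ and the (unnormalized) mean curvature vector $H$, together with the Hessian comparison bound
$$\mathrm{Hess}^N r(X,X) \;\geq\; a\coth(ar)\bigl(|X|^2 - \langle X, \nabla^N r\rangle^2\bigr)$$
valid on $N \setminus \{x_0\}$, one obtains
$$\Delta_M r \;\geq\; (m-1)\,a\coth(ar) \,-\, |H| \;\geq\; (m-1)a - b \;>\; 0,$$
after summing over $i$, using $\sum_i \langle e_i, \nabla^N r\rangle^2 = |\nabla^M r|^2 \leq 1$, and invoking $\coth(ar) \geq 1$ together with $|H| \leq b$.

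A direct computation yields $-\Delta_M f / f = \alpha \Delta_M r - \alpha^2 |\nabla^M r|^2$, so substituting the previous bound and using $|\nabla^M r|^2 \leq 1$ gives
$$\frac{-\Delta_M f}{f} \;\geq\; \alpha\bigl[(m-1)a - b\bigr] \,-\, \alpha^2$$
at every point of $M$. Feeding this into Barta and maximizing the right-hand side over $\alpha > 0$ by the choice $\alpha = [(m-1)a - b]/2$ yields exactly $\lambda_1(M) \geq [(m-1)a - b]^2/4$.

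The only subtlety is that $r|_M$ may fail to be smooth at $x_0$ if $x_0 \in M$; this is handled by applying Barta's principle on an exhaustion of $M \setminus \overline{B(x_0, \varepsilon)}$ by compact subdomains and letting $\varepsilon \downarrow 0$, which does not affect the infimum. I expect the Hessian comparison step to be the only substantive point: the remaining ingredients are a one-variable optimization and the standard exhaustion used to pass from compact domains to all of $M$ in Barta's inequality.
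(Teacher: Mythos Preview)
Your argument is correct. This theorem is not proved in the paper itself; it is quoted from Bessa--Montenegro \cite{BM} as background, so there is no in-paper proof to compare against. That said, your approach---Barta's inequality applied to $f=e^{-\alpha r}$, combined with the Hessian comparison $\mathrm{Hess}^N r \geq a\coth(ar)\,(\,\cdot\, - \langle\,\cdot\,,\nabla^N r\rangle^2)$ to obtain $\Delta_M r \geq (m-1)a - b$, followed by optimization in $\alpha$---is exactly the method used in the original Bessa--Montenegro paper (and in Cheung--Leung before it for the hyperbolic case). One cosmetic point: rather than excising a small ball around $x_0$ and arguing about the limit, it is cleaner simply to choose $x_0\in N\setminus M$, which is possible since $m<n$ and the image of $M$ has measure zero in $N$; then $r$ is smooth on all of $M$ and Barta applies on every compact subdomain without further fuss.
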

\noindent Candel \cite{Candel} gave an upper bound for the first eigenvalue of the universal cover of a complete stable minimal surface in the $3$-dimensional hyperbolic space $\mathbb{H}^3$. More precisely, it was proved
\begin{theorem*}[\cite{Candel}]
Let $M$ be a complete simply connected stable minimal surface
in $\mathbb{H}^3$. Then the first eigenvalue
of $M$ satisfies
\begin{eqnarray*}
\frac{1}{4} \leq \lambda_1 (M) \leq \frac{4}{3} .
\end{eqnarray*}
\end{theorem*}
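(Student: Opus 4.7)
The two bounds are proved by different methods. For the lower bound $\lambda_1(M)\ge 1/4$, we apply the Bessa--Montenegro theorem cited above to $M$ as a $2$-dimensional minimal submanifold ($H\equiv 0$, hence $b=0$) in the $3$-dimensional ambient $N=\mathbb H^3$ of constant sectional curvature $-1$ (so $a=1$): with $m=2$ the bound becomes
$$\lambda_1(M)\;\ge\;\frac{[(2-1)\cdot 1-0]^2}{4}\;=\;\frac{1}{4}.$$
Equivalently, the Gauss equation for a minimal surface in $\mathbb H^3$ gives $K_M=-1-\tfrac{1}{2}|A|^2\le -1$, so $M$ is a complete simply connected surface of curvature $\le -1$ and McKean's theorem applies directly.

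For the upper bound one must exploit stability. Since $\Ric(\nu,\nu)=-2$ in $\mathbb H^3$, the stability inequality reads
$$\int_M(|A|^2-2)\phi^2 \;\le\; \int_M |\nabla\phi|^2 \qquad\text{for every }\phi\in C_c^\infty(M),$$
and using the Gauss identity $|A|^2=-2K_M-2$ is equivalent to
$$-2\int_M K_M\,\phi^2 \;\le\; \int_M|\nabla\phi|^2+4\int_M\phi^2.$$
The plan is to feed into the variational characterization $\lambda_1(M)\le \int|\nabla\phi|^2/\int\phi^2$ a test function of the form $\phi=e^{-\alpha r}\chi_R$, where $r$ is the restriction to $M$ of the ambient distance from a fixed point $p\in\mathbb H^3$ and $\chi_R$ is a smooth intrinsic cutoff. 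Since $M$ is minimal, the ambient Hessian comparison in $\mathbb H^3$ gives $\Delta_M r=\coth r\,(2-|\nabla^M r|^2)$ with $|\nabla^M r|\le 1$, so both the numerator and denominator of the Rayleigh quotient can be estimated in terms of the intrinsic volume growth of $M$ and the cutoff.

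The principal obstacle is extracting the sharp constant $4/3$. On a totally geodesic $\mathbb H^2$ the choice $\alpha=1/2$ already yields $\lambda_1\le 1/4$, but on a general stable minimal surface the intrinsic volume growth can be faster, so a larger $\alpha$ is needed and the cutoff errors become non-trivial. Here stability re-enters: plugging $\phi=e^{-\alpha r}\chi_R$ into the stability inequality, or equivalently invoking Fischer--Colbrie--Schoen to produce a positive Jacobi field $u$ with $\Delta u=2(K_M+2)u$ and testing against $u$-weighted cutoffs, produces an upper bound on the exponential rate of intrinsic volume growth of $M$. Optimizing the exponent $\alpha$ against this growth rate, and trading curvature integrals for gradient integrals via Gauss, should pin down the constant $\tfrac{4}{3}$. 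The hardest step is exactly this one-parameter optimization, together with verifying that all cutoff errors are genuinely of lower order as $R\to\infty$.
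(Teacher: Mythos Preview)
The paper does not actually prove this theorem; it is quoted in the introduction as a result of Candel \cite{Candel} and no argument is supplied. The paper's own contribution is the generalization in Theorem~\ref{t1}, which for $n=2$ in $\mathbb{H}^3$ (so $K_1=K_2=-1$, $K_3=0$) yields only $\tfrac14\le\lambda_1(M)\le n^2=4$, strictly weaker than Candel's $\tfrac43$. So there is nothing in the paper to compare your upper-bound argument against.

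Your lower bound is correct and is exactly the argument the paper invokes (Bessa--Montenegro, or equivalently McKean via the Gauss equation).

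Your upper bound, however, is not a proof but a programme. You set up the Rayleigh quotient with the extrinsic test function $e^{-\alpha r}\chi_R$, correctly record $\Delta_M r=\coth r\,(2-|\nabla^M r|^2)$, and then say that stability ``produces an upper bound on the exponential rate of intrinsic volume growth'' and that optimizing $\alpha$ ``should pin down the constant $\tfrac43$''. None of this is carried out: you neither derive an explicit growth bound from stability, nor show that the resulting optimum is $\tfrac43$, nor verify that the cutoff errors vanish at that exponent. This optimization is precisely the non-trivial content of Candel's theorem; the sharp constant does not fall out of generic volume-growth estimates, and the paper's own method---which follows the same broad pattern of inserting $|A|$-weighted test functions into the stability inequality and passing to the limit---loses a factor of $3$ here. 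If you want the genuine $\tfrac43$, you will have to go back to Candel's original argument (which exploits two-dimensional features, in particular uniformization and a conformal change governed by the Fischer--Colbrie--Schoen positive Jacobi field) rather than the higher-dimensional Simons-type machinery used in this paper.
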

\noindent Recall that an $n$-dimensional minimal hypersurface $M$ in a Riemannian manifold $N$ is called {\it stable} if it holds that for any compactly supported Lipschitz function $f$ on $M$
\begin{eqnarray*}
\int_\Sigma |\nabla f|^2 - (\overline{\Ric}(e_{n+1}) + |A|^2 ) f^2 \geq 0 ,
\end{eqnarray*}
where $e_{n+1}$ is the unit normal vector of $M$ in $N$, $\overline{\Ric}(e_{n+1})$ is the Ricci curvature of $N$ in the direction of $e_{n+1}$, and $|A|$ is the length of the second fundamental form of $\Sigma$. Recently the second author \cite{Seo2011} extended the above theorem to higher-dimensional cases as follows:
\begin{theorem*}[\cite{Seo2011}]
Let $M$ be a complete stable minimal hypersurface in
$\mathbb{H}^{n+1}$ with finite $L^2$-norm of the second fundamental form $A$ (i.e., $\int_M |A|^2 dv  < \infty$). Then we have
\begin{eqnarray*}
\frac{(n-1)^2}{4} \leq  \lambda_1(M) \leq n^2 .
\end{eqnarray*}
\end{theorem*}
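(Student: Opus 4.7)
The proof splits cleanly into two inequalities. For the lower bound $\lambda_1(M) \geq \tfrac{(n-1)^2}{4}$, the plan is to invoke the Bessa--Montenegro theorem recalled just above: since $\mathbb{H}^{n+1}$ has constant sectional curvature $-1$ we may take $a = 1$, and since $M$ is minimal $|H| \equiv 0$, so $b = 0$ satisfies $b < (n-1)a$ for $n \geq 2$. Substituting these into the Bessa--Montenegro conclusion, with their $m$ replaced by $n$, gives exactly $\lambda_1(M) \geq (n-1)^2/4$. Neither the stability hypothesis nor the finiteness of $\int_M |A|^2$ is needed for this half.

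For the upper bound $\lambda_1(M) \leq n^2$, I would apply the variational characterization of $\lambda_1$ to a carefully chosen family of compactly supported test functions. The natural candidate is
\[
\phi_R(x) \;=\; \eta_R(x)\, e^{-n\, r(x)},
\]
where $r(x) = \operatorname{dist}_{\mathbb{H}^{n+1}}(p,x)$ for a fixed basepoint $p \in M$, and $\eta_R$ is a smooth cutoff supported in $\{r \leq 2R\}$, equal to $1$ on $\{r \leq R\}$, with $|\nabla \eta_R| \leq 2/R$. Since $|\nabla_M r| \leq |\nabla_{\mathbb{H}^{n+1}} r| = 1$ pointwise on $M$, expanding $|\nabla_M \phi_R|^2$ and applying Cauchy--Schwarz gives a leading term $n^2 \phi_R^2$ plus error terms involving $\nabla_M \eta_R$. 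The goal is to show that these errors are $o\bigl(\int_M \phi_R^2\bigr)$ as $R \to \infty$, so that the Rayleigh quotient of $\phi_R$ does not exceed $n^2$ in the limit.

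The main obstacle, and the place where stability and $\int_M |A|^2 < \infty$ enter, is controlling the $n$-dimensional volume of $M$ inside the ambient balls $\{r \leq R\}$ well enough to make those cutoff error terms negligible. My plan here is to combine the stability inequality with the Simons identity
\[
\tfrac{1}{2} \Delta |A|^2 \;=\; |\nabla A|^2 \;-\; |A|^4 \;-\; n |A|^2
\]
in a Bochner-with-cutoff argument: the favourable $-n|A|^2$ term produced by the negative ambient curvature can be used to absorb the $|A|^4$ term against a weighted $L^2$ norm of $|A|$, and $\int_M |A|^2 < \infty$ then forces decay of $|A|$ along $M$ at infinity. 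This decay translates, via the Gauss equation, into the volume growth bound needed to kill the cutoff error and close the argument. The delicate part will be balancing the boundary terms from integration by parts against the finite total curvature hypothesis, which is the standard technical difficulty in arguments of this kind.
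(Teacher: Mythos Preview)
Your lower bound is exactly what the paper does: invoke Bessa--Montenegro with $a=1$ and $b=0$.

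For the upper bound, however, your approach diverges sharply from the paper's and has a real gap. The paper does \emph{not} use an extrinsic test function like $\eta_R e^{-nr}$. Instead the key idea is to take $\phi = |A|\,g$, where $g$ is a standard cutoff, as the test function in the Rayleigh quotient. This immediately yields
\[
\lambda_1(M)\int_M g^2|A|^2 \;\leq\; (1+\eps)\int_M g^2\bigl|\nabla|A|\bigr|^2 + \Bigl(1+\tfrac{1}{\eps}\Bigr)\int_M |A|^2|\nabla g|^2 .
\]
Then the Simons-type inequality combined with the Kato inequality $|\nabla A|^2 - |\nabla|A||^2 \geq \tfrac{2}{n}|\nabla|A||^2$ and the stability inequality (with $\overline{\Ric}(e_{n+1}) = -n$ in $\mathbb{H}^{n+1}$) produce
\[
\int_M |A|^2|\nabla g|^2 + 2n\int_M g^2|A|^2 \;\geq\; \frac{2}{n}\int_M g^2\bigl|\nabla|A|\bigr|^2 .
\]
Feeding this back into the Rayleigh estimate, the hypothesis $\int_M|A|^2<\infty$ kills the $|\nabla g|^2$ terms as $R\to\infty$, and one reads off $\lambda_1(M)\leq (1+\eps)n^2$ directly (assuming $|A|\not\equiv 0$; the totally geodesic case is handled by McKean). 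No volume growth estimate is ever needed.

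Your route, by contrast, stakes everything on a volume growth bound for $M\cap\{r\leq R\}$ that you have not actually established. The sketch ``stability plus Simons plus $\int|A|^2<\infty$ forces pointwise decay of $|A|$, which via Gauss controls intrinsic curvature, which controls volume growth'' has two weak links: (i) $L^2$ finiteness of $|A|$ does not by itself give pointwise decay without an additional Moser iteration argument you have not supplied, and (ii) even granting intrinsic curvature control, what you need is an \emph{extrinsic} volume bound, and passing from intrinsic to extrinsic ball volumes on a minimal hypersurface in $\mathbb{H}^{n+1}$ is not automatic. Moreover, the exponent $n$ in $e^{-nr}$ is chosen to match the desired answer rather than arising from the argument; nothing in your outline singles out $n^2$ as the threshold. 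The paper's choice of $|A|$ as test function is precisely what makes the constant $n^2$ fall out naturally from the interplay of the Kato constant $\tfrac{2}{n}$ and the curvature term $2n$.
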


In Section 2, we generalize this theorem to a stable minimal hypersurface in a Riemannian manifold with pinched negative sectional curvature, i.e., all sectional curvatures lie between two negative constants. In Section 3, we investigate a vanishing theorem for $L^2$ harmonic $1$-forms on a minimal hypersurface. Miyaoka \cite{Miyaoka} proved that if $M$ is a complete stable minimal hypersurface in Euclidean space, then there is no nontrivial $L^2$ harmonic $1$-form on $M$. Later Yun \cite{Yun} proved that if $M \subset \mathbb{R}^{n+1}$ is a complete minimal hypersurface with sufficiently small total scalar curvature $\int_M |A|^n dv$, then there is non nontrivial $L^2$ harmonic $1$-form on $M$. Yun's result still holds for any complete minimal submanifold with sufficiently small total scalar curvature in hyperbolic space (\cite{Seo2010-1}). Recently the second author \cite{Seo2010} showed that if $M$ is an $n$-dimensional complete stable minimal hypersurface in hyperbolic space satisfying $(2n-1)(n-1) < \lambda_1 (M)$, then there is no nontrivial $L^2$ harmonic $1$-form on $M$. We generalize this result to a complete noncompact Riemannian manifold with sectional curvature bounded below by a nonpositive constant.(See Theorem \ref{l2}.)

One of important results about the geometric structure of a stable minimal hypersurface $M$ in $(n+1)$-dimensional Euclidean space, $n \geq 3$ is that such $M$ must have only one end (\cite{CSZ}). Later Ni \cite{Ni} proved that an $n$-dimensional complete minimal submanifold $M$ in Euclidean space has sufficiently small total scalar curvature, then $M$ must have only one end. More precisely, he proved
\begin{theorem*} {\rm(\cite{Ni})}
Let $M$ be an $n$-dimensional complete immersed minimal
submanifold in $\mathbb{R}^{n+p}$, $n \geq 3$. If
\begin{eqnarray*}
\Big(\int_M |A|^n dv \Big)^{\frac{1}{n}} < C_1 = \sqrt{\frac{n}{n-1}C_s^{-1}},
\end{eqnarray*}
then $M$ has only one end. (Here $C_s$ is a Sobolev constant in
\cite{HS}.)
\end{theorem*}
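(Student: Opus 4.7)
The plan is to argue by contradiction: suppose $M$ has at least two ends. Under the smallness assumption on $\int_M |A|^n$, the Hoffman--Spruck Sobolev inequality holds on the minimal submanifold $M$ and forces each end to be non-parabolic; the Li--Tam theory of harmonic functions on a manifold with two non-parabolic ends then produces a bounded non-constant harmonic function $u$ with $\int_M|\nabla u|^2\,dv<\infty$. Thus $\omega=du$ is a non-trivial $L^2$ harmonic $1$-form, and the problem reduces to a vanishing statement for $\omega$.

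Next I would combine the Bochner formula $\tfrac12\Delta|\omega|^2=|\nabla\omega|^2+\Ric(\omega,\omega)$ with two ingredients: the Ricci lower bound $\Ric(\omega,\omega)\ge -|A|^2|\omega|^2$, coming from the Gauss equation for a minimal submanifold of $\RR^{n+p}$; and the refined Kato inequality $|\nabla\omega|^2\ge\tfrac{n}{n-1}\bigl|\nabla|\omega|\bigr|^2$, valid since $\omega=du$ with $u$ harmonic. Together with the identity $\tfrac12\Delta|\omega|^2 = |\omega|\,\Delta|\omega|+\bigl|\nabla|\omega|\bigr|^2$, these yield the pointwise inequality
\[
|\omega|\,\Delta|\omega|\ \ge\ \tfrac{1}{n-1}\bigl|\nabla|\omega|\bigr|^2 - |A|^2|\omega|^2.
\]

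The final step is an integration-by-parts and Sobolev argument. Multiplying the inequality above by $\eta^2$ for a compactly supported cutoff $\eta$, integrating over $M$, and moving derivatives to $\eta$ by parts gives, after absorbing the cross term via Cauchy--Schwarz with a small parameter $\delta$, a bound on $\int_M \eta^2\bigl|\nabla|\omega|\bigr|^2$ in terms of $\int_M \eta^2|A|^2|\omega|^2$ plus a cutoff error. I then dominate the $|A|^2|\omega|^2$ term using H\"older's inequality together with the Hoffman--Spruck Sobolev inequality applied to $\eta|\omega|$:
\[
\int_M \eta^2|A|^2|\omega|^2\,dv \ \le\ C_s \Bigl(\int_M|A|^n\,dv\Bigr)^{2/n}\int_M \bigl|\nabla(\eta|\omega|)\bigr|^2\,dv .
\]
Expanding $|\nabla(\eta|\omega|)|^2$ and combining with the previous inequality, the hypothesis $\bigl(\int_M|A|^n\bigr)^{2/n}<\tfrac{n}{(n-1)C_s}$ allows $\delta$ to be chosen so small that the net coefficient of $\int_M\eta^2\bigl|\nabla|\omega|\bigr|^2$ is strictly positive. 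A standard exhaustion with $\eta\equiv 1$ on $B_R$ and $|\nabla\eta|\le C/R$, together with $|\omega|\in L^2(M)$, drives the cutoff error to zero as $R\to\infty$, forcing $\nabla|\omega|\equiv 0$; since $M$ has infinite volume by the monotonicity formula, this forces $|\omega|\equiv 0$, contradicting the non-constancy of $u$.

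The main obstacle is the simultaneous bookkeeping of constants. The factor $\tfrac{n}{n-1}$ supplied by the refined Kato inequality is exactly what yields the sharp threshold $C_1=\sqrt{n/((n-1)C_s)}$, and the absorption parameter $\delta$ must be tuned against the Sobolev constant so that the hypothesis is precisely strong enough to push the combined coefficient strictly above zero before passing to the limit in the exhaustion.
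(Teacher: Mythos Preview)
The paper does not prove this statement; it is quoted in the introduction as a known result of Ni and serves only as motivation for the paper's own Theorem~\ref{thm:one end} in the pinched negative curvature setting. So there is no proof in the paper to compare against directly.

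That said, your argument is correct and is essentially the standard one: contradiction via a nontrivial finite-energy harmonic function, Bochner plus the Gauss-equation Ricci bound $\Ric(\omega,\omega)\ge -|A|^2|\omega|^2$, the refined Kato inequality $|\nabla\omega|^2\ge \tfrac{n}{n-1}|\nabla|\omega||^2$, and then H\"older together with the Hoffman--Spruck Sobolev inequality to absorb the $|A|^2|\omega|^2$ term. Your bookkeeping is right: the coefficient $\tfrac{n}{n-1}$ from integrating $|\omega|\Delta|\omega|$ against the cutoff exactly matches the threshold $C_1=\sqrt{n/((n-1)C_s)}$. This is also precisely the template the paper adapts in Section~4 for its negatively curved analogue, with the added wrinkle there that the $(n-1)K_1$ term is controlled via the eigenvalue lower bound $\lambda_1(M)\ge -K_2(n-1)^2/4$ from Bessa--Montenegro; in the Euclidean case that step is simply absent, and your proof is cleaner for it. One minor remark: for the existence of the nontrivial harmonic function you invoke Li--Tam after observing non-parabolicity from Sobolev, whereas the paper (for its version) cites Wei's structural result directly; both routes are valid.
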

In Section 4, we shall prove that the analogue of this theorem is still true in a Riemannian manifold with pinched negative sectional
curvature.(See Theorem \ref{thm:one end}.) In Section 5, we provide two sufficient conditions for complete minimal hypersurfaces in a Riemannian manifold with sectional curvature bounded above by a nonpositive
 constant.
%%%%%%%%%%%%%%%%%%%%%%%%%%%%%%%%%%%%%%%%%%%%%%%%
\section{Estimates for the bottom of the spectrum}
%%%%%%%%%%%%%%%%%%%%%%%%%%%%%%%%%%%%%%%%%%%%%%%%
Let $M$ be an oriented $n-$dimensional manifold immersed in an oriented $(n+1)$-dimensional Riemannian manifold $N$. We choose a local vector field of orthonormal frames $e_1, ..., e_{n+1}$ in $N$ such that, restricted to $M$, the vectors $e_1, ..., e_n$ are tangent to $M$. With respect to this frame field of $N$, let $K_{ijkl}$ be a curvature tensor of $N$. We denote by $K_{ijkl;m}$ the covariant derivative of $K_{ijkl}$. Then we have the following estimate for the bottom of the spectrum of the Laplace operator on a stable minimal hypersurface.
\begin{theorem}\label{t1}
Let $N$ be an $(n+1)$-dimensional complete simply connected Riemannian manifold with sectional curvature satisfying that
$$ K_1\leq K_N\leq K_2, $$
where $K_1, K_2$ are constants and $K_1\leq K_2<0$. Let $M$ be a complete stable non-totally geodesic minimal hypersurface in $N$. Assume that
$$ \lim_{R\rightarrow \infty}R^{-2}\intl_{B(R)}|A|^2=0, $$
where $B(R)$ is a geodesic ball of radius $R$ on $M$. If $ |\grad K|^2=\suml_{i,j,k,l,m}K^2_{ijkl;m}\leq K_3^2|A|^2 $ for some constant $K_3>0$, then we have
\begin{eqnarray*}
-K_2 \dfrac{(n-1)^2}{4}\leq\lambda_1(M) \leq \dfrac{(2K_3-n(K_1+K_2))n}{2}.
\end{eqnarray*}
\end{theorem}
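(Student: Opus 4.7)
The lower bound is immediate from the Bessa--Montenegro theorem recalled in the introduction: set $a=\sqrt{-K_2}$ so that $K_N\leq -a^2$, take $m=n$ and $b=0$ (since $M$ is minimal), and observe that the hypothesis $b<(m-1)a$ reduces to the automatic inequality $0<(n-1)\sqrt{-K_2}$. The conclusion yields exactly $\lambda_1(M)\geq -K_2(n-1)^2/4$.

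For the upper bound I plan to feed the test function $f=|A|\phi$ into the variational characterisation $\lambda_1(M)\leq\int_M|\nabla f|^2/\int_M f^2$, where $\phi$ is the standard radial Lipschitz cutoff supported in the geodesic ball $B(2R)$, identically $1$ on $B(R)$, with $|\nabla\phi|\leq 2/R$. (The non-totally-geodesic hypothesis makes $\int_M|A|^2\phi^2>0$ once $R$ is large, so the quotient is well defined.) The analytic engine is Simons' identity on $M\subset N$: combining the pinched bounds $K_1\leq K_N\leq K_2$ with the covariant-derivative hypothesis $|\nabla K|^2\leq K_3^2|A|^2$ (the latter absorbed via $2ab\leq a^2+b^2$) should produce a Simons-type inequality of the form
\begin{equation*}
\tfrac12\Delta|A|^2 \;\geq\; |\nabla A|^2 - |A|^4 + \bigl(nK_2-2K_3\bigr)|A|^2,
\end{equation*}
which, combined with the improved Kato inequality $|\nabla A|^2\geq\tfrac{n+2}{n}|\nabla|A||^2$ for minimal hypersurfaces, becomes
\begin{equation*}
|A|\Delta|A| \;\geq\; \tfrac{2}{n}|\nabla|A||^2 - |A|^4 + (nK_2-2K_3)|A|^2.
\end{equation*}

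Multiplying this by $\phi^2$, integrating by parts in $\int\phi^2|A|\Delta|A|=-\int\phi^2|\nabla|A||^2-2\int\phi|A|\langle\nabla\phi,\nabla|A|\rangle$, and then bounding $\int\phi^2|A|^4$ from above by the stability inequality applied to $\eta=\phi|A|$ (using $\overline{\Ric}(e_{n+1})\geq nK_1$), the cross terms $2\int\phi|A|\langle\nabla\phi,\nabla|A|\rangle$ coming from the two manipulations cancel exactly, leaving the key intermediate estimate
\begin{equation*}
\tfrac{2}{n}\int_M\phi^2|\nabla|A||^2 \;\leq\; \int_M|A|^2|\nabla\phi|^2 + \bigl(2K_3-n(K_1+K_2)\bigr)\int_M\phi^2|A|^2.
\end{equation*}
Plugging this back into the Rayleigh quotient $\lambda_1\int\phi^2|A|^2\leq\int|\nabla(\phi|A|)|^2$, expanded via an $\varepsilon$-Cauchy--Schwarz on $2\int\phi|A|\nabla\phi\cdot\nabla|A|$, and invoking the growth hypothesis $R^{-2}\int_{B(R)}|A|^2\to 0$ to kill $\int|A|^2|\nabla\phi|^2\leq(4/R^2)\int_{B(2R)}|A|^2$, the limits $R\to\infty$ followed by $\varepsilon\to 0$ produce the advertised bound $\lambda_1(M)\leq(2K_3-n(K_1+K_2))n/2$.

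The principal obstacle lies in the first step: deriving the Simons-type inequality above when the ambient $N$ is not a space form. One must unfold Simons' formula for a minimal hypersurface in a general Riemannian manifold, estimate every curvature term $\bar R_{ijk\ell}$ that appears via the sectional-curvature pinching $K_1\leq K_N\leq K_2$, and absorb the unavoidable covariant-derivative contribution $\sum K_{ijk\ell;m}^2$ into the $|A|^2$-coefficient by using $|\nabla K|^2\leq K_3^2|A|^2$ together with $2ab\leq a^2+b^2$, with exactly the right weighting to extract the sharp constant $-2K_3$. Once that inequality is in place, the rest is standard stability/Kato/cutoff bookkeeping, patterned on the hyperbolic case of \cite{Seo2011}.
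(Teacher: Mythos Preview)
Your proposal is correct and follows essentially the same approach as the paper: Bessa--Montenegro for the lower bound, then for the upper bound the Simons-type inequality (which the paper simply quotes from Schoen--Simon--Yau \cite{SSY} rather than deriving), the improved Kato inequality, stability applied to $\phi|A|$, and the Rayleigh quotient with a radial cutoff, all combined into exactly your key intermediate estimate $\tfrac{2}{n}\int\phi^2|\nabla|A||^2\leq\int|A|^2|\nabla\phi|^2+(2K_3-n(K_1+K_2))\int\phi^2|A|^2$. The only cosmetic difference is the endgame: the paper argues by contradiction (assuming $\lambda_1$ exceeds the bound forces $\nabla|A|\equiv 0$, hence $|A|$ constant, hence $|A|\equiv 0$ by infinite volume), whereas your direct division by $\int\phi^2|A|^2$ followed by $R\to\infty$, $\varepsilon\to 0$ is slightly cleaner and sidesteps the infinite-volume observation.
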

\begin{proof}
By the result of Bessa-Montenegro \cite{BM}, the first eigenvalue $\lambda_1(M)$ of a complete minimal hypersurface $M$ in $N$ is bounded below by $-K_2 \dfrac{(n-1)^2}{4} >0$ as mentioned in the introduction. In the rest of the proof we shall find the upper bound.

Consider a geodesic ball $B(R)$ of radius $R$ centered at $p\in M$. From the definition of $\lambda_1(M)$ and the domain monotonicity of eigenvalue, it follows
\begin{eqnarray} \label{ineq:lambda_1}
\lambda_1(M)\leq\lambda_1(B(R))\leq\frac{\intl_{B(R)}|\grad \phi|^2}{\intl_{B(R)}\phi^2}
\end{eqnarray}
for any compactly supported nonconstant Lipschitz function $\phi$. Choose a test function $g$
satisfying $0 \leq g \leq 1$, $g \equiv 1$ on
$B(R)$, $g \equiv 0 $ on $M \setminus B(2R)$, and
$\displaystyle{|\nabla g| \leq \frac{1}{R}}$. Put $\phi=|A|g$ in the above inequality (\ref{ineq:lambda_1}). Then
$$\begin{aligned} \lambda_1(M)\intl_{B(R)}|A^2|g^2
&\leq \intl_{B(R)}|\grad(|A|g)|^2\\
&=\intl_{B(R)}g^2|\grad|A||^2+\intl_{B(R)}|A|^2|\grad g|^2+2\intl_{B(R)}g|A|\left\langle \grad|A|, \grad g\right\rangle .
\end{aligned}$$
By using Young's inequality, for any $\eps>0$, we see that
$$ 2\intl_{B(R)}g|A|\left\langle\grad|A|, \grad g\right\rangle\leq\frac{1}{\eps}\intl_{B(R)}|A|^2|\grad g|^2+\eps\intl_{B(R)}g^2|\grad|A||^2.  $$
Hence
\begin{equation}\label{e28}
\lambda_1(M)\intl_{B(R)}g^2|A|^2\leq(1+\eps)\intl_{B(R)}g^2|\grad|A||^2+\left(1+\frac{1}{\eps}\right)\intl_{B(R)}|A|^2|\grad g|^2.
\end{equation}
On the other hand, from the equations (1.22) and (1.27) in \cite{SSY}, the length of the second fundamental form $|A|$ satisfies that
$$ |A|\Delta|A|+|\grad|A||^2\geq\sum h_{ijk}^2-2K_3|A|^2+n(2K_2-K_1)|A|^2-|A|^4 $$
or equivalently,
$$ |A|\Delta|A|+2K_3|A|^2-n(2K_2-K_1)|A|^2+|A|^4\geq\sum h_{ijk}^2-|\grad|A||^2 .$$
Since $K_2-K_1\geq0$, this inequality implies
$$ |A|\Delta|A|+2K_3|A|^2-nK_2|A|^2+|A|^4\geq\sum h_{ijk}^2-|\grad|A||^2 =|\nabla A|^2 -|\nabla |A||^2 .$$
Applying the following Kato-type inequality \cite{Xin}
$$ |\grad A|^2-|\grad|A||^2\geq\frac{2}{n}|\grad|A||^2, $$
we obtain
$$ |A|\Delta|A|+2K_3|A|^2-nK_2|A|^2+|A|^4\geq\frac{2}{n}|\grad|A||^2 $$
Multiplying both side by a Lipschitz function $f^2$ with compact support in $B(R)\subset M$ and integrating over $B(R)$, we get
$$ \intl_{B(R)}f^2|A|\Delta|A|+2K_3\intl_{B(R)}f^2|A|^2-nK_2\intl_{B(R)}f^2|A|^2+\intl_{B(R)}f^2|A|^4\geq\frac{2}{n}\intl_{B(R)}f^2|\grad|A||^2. $$
The divergence theorem gives
$$ 0=\intl_{B(R)}\text{div}(|A|f^2\grad|A|) =\intl_{B(R)}f^2|A|\Delta|A|+\intl_{B(R)}f^2|\grad|A||^2+2\intl_{B(R)}f|A|\left\langle\grad|A|, \grad f\right\rangle $$
Therefore
\begin{align}
\intl_{B(R)}f^2|A|^4+(2K_3-nK_2)\intl_{B(R)}f^2|A|^2
&\geq\frac{2}{n}\intl_{B(R)}f^2|\grad|A||^2+\intl_{B(R)}f^2|\grad|A||^2\notag\\
&{~~~~~~~}+2\intl_{B(R)}f|A|\left\langle\grad|A|, \grad f\right\rangle\label{e211}
\end{align}
Since $M$ is stable, for any compactly supported function $\phi$ on $M$,
\begin{eqnarray} \label{ineq:stability}
\intl_M|\grad\phi|^2-\Big(|A|^2+\over{\text{Ric}}(e_{n+1})\Big)\phi^2\geq0 .
\end{eqnarray}
The assumption on sectional curvature of $N$ implies that
$$ nK_1\leq \over{\text{Ric}}(e_{n+1})=R_{n+1, 1, n+1, 1}+\ldots+ R_{n+1, n, n+1, n}\leq nK_2 .$$
Thus the stability inequality (\ref{ineq:stability}) becomes
 $$ \intl_M|\grad\phi|^2-\Big(|A|^2+nK_1\Big)\phi^2\geq0.$$
Replacing $\phi$ by $|A|f$ in the above inequality, we get
$$ \intl_{B(R)}|\grad(f|A|)|^2\geq \intl_{B(R)}f^2|A|^4+nK_1\intl_{B(R)}f^2|A|^2, $$
namely,
\begin{align}
\intl_{B(R)}|A|^2|\grad f|^2+\intl_{B(R)}&f^2|\grad|A||^2+2\intl_{B(R)}f|A|\left\langle\grad|A|, \grad f\right\rangle\notag\\
&{~~~~}\geq \intl_{B(R)}f^2|A|^4+nK_1\intl_{B(R)}f^2|A|^2 . \label{e212}
\end{align}
Combining the inequalities \eqref{e211} and \eqref{e212} gives
\begin{equation}\label{e213}
\intl_{B(R)}|A|^2|\grad f|^2+\Big(2K_3-n(K_1+K_2)\Big)\intl_{B(R)}f^2|A|^2\geq\frac{2}{n}\intl_{B(R)}f^2|\grad|A||^2 .
\end{equation}
Substituting $g$ for $f$ in the inequality  \eqref{e213} and using the inequality \eqref{e28}, we obtain
\begin{align*}
\Big\{1+\frac{\Big(2K_3-n(K_1+K_2)\Big)}{\lambda_1(M)}\Big(1&+\frac{1}{\eps}\Big)\Big\}\intl_{B(R)}|A|^2|\grad g|^2 \\
&\geq\left\{\frac{2}{n}-\frac{\Big(2K_3-n(K_1+K_2)\Big)}{\lambda_1(M)}(1+\eps)\right\}\intl_{B(R)}g^2|\grad|A||^2 . %\label{e214}
\end{align*}
Suppose that $\lambda_1(M)>\dfrac{\Big(2K_3-n(K_1+K_2)\Big)n}{2}$. Choose a sufficiently samll $\eps>0$ satisfying that
 $$\left\{\frac{2}{n}-\frac{\Big(2K_3-n(K_1+K_2)\Big)}{\lambda_1(M)}(1+\eps)\right\}>0 .$$
\noindent Using the fact that $|\grad g|\leq\dfrac{1}{R}$ and growth condition on $\intl_{B(R)} |A|^2$, we can conclude that by letting $R\to\infty$
$$ \intl_M|\grad|A||^2=0 .$$
This implies that $|A|$ is constant. Since the volume of $M$ is infinite, one sees that $|A|\equiv0$. This means that $M$ is totally geodesic, which is impossible by our assumption. Therefore we have $\lambda_1(M)\leq\dfrac{\Big(2K_3-n(K_1+K_2)\Big)n}{2} $ .
\end{proof}

%\begin{corollary}Let $N$ be an $(n+1)$-dimensional Riemannian manifold with sectional curvature satisfying
%$$ K_1\leq K_N\leq K_2, $$
%where $K_1, K_2$ are constant and $K_1\leq K_2<0$. Let $M$ be a compact stable minimal hypersurface of $N$. Assume that
%$$ \intl_{B(R)}|A|^2=o(R^2). $$
%If $ |\grad K|^2=\suml_{i,j,k,l,m}K^2_{ijkl;m}\leq K_3^2|A|^2 $ for some constant $K_3>0$, and $|A|(p)\not=0$, for some point $p\in M$, then
%$$\dfrac{(n-1)^2}{4}(-K_2)\leq \lambda_1(M)\leq \dfrac{(2K_3-n(K_1+K_2))n}{2}.$$
%\end{corollary}

In particular, if $N$ is the $(n+1)$-dimensional hyperbolic space $\mathbb{H}^{n+1}$, then one sees that $K_1=K_2=-1$, and hence $|\grad K|^2=0$, i.e., $K_3=0$. Moreover, as mentioned in the introduction, it follows from the McKean's result \cite{McKean} that the first eigenvalue $\lambda_1(M)$ of a complete totally geodesic hypersurface $M \subset \mathbb{H}^{n+1}$ satisfies $\lambda_1(M)=\frac{(n-1)^2}{4}$. Therefore one can recover the following theorem which was proved by the second author.
\begin{theorem}[\cite{Seo2011}]
Let $M$ be a complete stable minimal hypersurface in $\mathbb{H}^{n+1}$ with $\int_M |A|^2dv<\infty$. Then we have
\begin{eqnarray*}
\frac{(n-1)^2}{4}\leq\lambda_1(M)\leq n^2 .
\end{eqnarray*}
\end{theorem}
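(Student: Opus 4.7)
The plan is to recover the hyperbolic case as a direct specialization of Theorem~\ref{t1}, after disposing of the totally geodesic case by McKean's theorem. First I would observe that $\mathbb{H}^{n+1}$ has constant sectional curvature $-1$, so the hypotheses of Theorem~\ref{t1} hold with $K_1=K_2=-1$. Since the ambient curvature tensor is parallel, the covariant derivative $K_{ijkl;m}$ vanishes identically and so one may take $K_3=0$ in the hypothesis $|\nabla K|^2 \leq K_3^2 |A|^2$.

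Next I would verify the integral growth condition. From $\int_M |A|^2\,dv <\infty$ it follows that $\int_{B(R)}|A|^2\,dv$ is bounded uniformly in $R$, hence
$$ \lim_{R\to\infty} R^{-2}\intl_{B(R)}|A|^2\,dv = 0,$$
so the hypothesis of Theorem~\ref{t1} is satisfied (this is considerably weaker than requiring finite total $|A|^2$).

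The argument then splits into two cases. If $M$ is totally geodesic, then $M$ is isometric to a copy of $\mathbb{H}^n$ (being complete, simply-connected after lifting, with constant curvature $-1$), and McKean's theorem \cite{McKean} gives $\lambda_1(M)=\frac{(n-1)^2}{4}$, which lies in the interval $[\frac{(n-1)^2}{4}, n^2]$ trivially. If $M$ is not totally geodesic, Theorem~\ref{t1} applies directly, yielding
$$ -K_2\,\frac{(n-1)^2}{4}\ \leq\ \lambda_1(M)\ \leq\ \frac{\bigl(2K_3-n(K_1+K_2)\bigr)n}{2}. $$
Substituting $K_1=K_2=-1$ and $K_3=0$, the left side becomes $\frac{(n-1)^2}{4}$ and the right side becomes $\frac{(0+2n)\,n}{2}=n^2$, which is exactly the claimed inequality.

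There is essentially no obstacle here beyond bookkeeping: the only minor subtlety is ensuring that Theorem~\ref{t1} is applicable in the non-totally-geodesic branch (which is built into its hypothesis) and that the totally geodesic case is handled separately via McKean. Both are immediate, so the statement follows.
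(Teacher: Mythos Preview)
Your proposal is correct and matches the paper's own derivation essentially verbatim: specialize Theorem~\ref{t1} with $K_1=K_2=-1$ and $K_3=0$ (since $\mathbb{H}^{n+1}$ has parallel curvature tensor), check that $\int_M|A|^2<\infty$ forces the growth hypothesis, and handle the totally geodesic case separately via McKean's result that $\lambda_1(\mathbb{H}^n)=\frac{(n-1)^2}{4}$. The only cosmetic remark is that a complete totally geodesic hypersurface in $\mathbb{H}^{n+1}$ is already isometric to $\mathbb{H}^n$, so no passage to a universal cover is needed.
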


%%%%%%%%%%%%%%%%%%%%%%%%%%%%%%%%%%%%%%%%%%%%%%%%%%%%%%%
\section{Vanishing theorem on minimal hypersufaces}
%%%%%%%%%%%%%%%%%%%%%%%%%%%%%%%%%%%%%%%%%%%%%%%%%%%%%%%
We first recall some useful results which we shall use in this section.
\begin{lemma} [\cite{Leung}] \label{lem:Leung}
Let $M$ be an $n$-dimensional complete immersed minimal hypersurface in a Riemannian manifold $N$. If all the sectional curvatures of $N$ are bounded below by a constant $K$ then
$$ \text{Ric}\geq (n-1)K-\frac{n-1}{n}|A|^2. $$
\end{lemma}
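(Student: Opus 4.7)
The plan is to prove this via the Gauss equation for a hypersurface and a pointwise bound that uses only minimality and Cauchy--Schwarz. Fix a point $p \in M$ and a unit tangent vector $e_i$; extend it to a local orthonormal frame $e_1, \dots, e_n$ tangent to $M$ that diagonalizes the second fundamental form at $p$, say $h_{ij} = \lambda_i \delta_{ij}$. Write $\bar R$ for the curvature tensor of $N$ and $R$ for that of $M$. The Gauss equation for a hypersurface gives
\begin{equation*}
R_{ijij} = \bar R_{ijij} + h_{ii} h_{jj} - h_{ij}^2 .
\end{equation*}

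Summing over $j \neq i$ and using the diagonalization,
\begin{equation*}
\mathrm{Ric}(e_i,e_i) = \sum_{j\neq i} \bar R_{ijij} + \sum_{j\neq i} \lambda_i \lambda_j - \sum_{j\neq i}\lambda_j^2 \delta_{ij} \cdot 0 \ \text{(cross terms vanish)}.
\end{equation*}
More cleanly, $\sum_{j \neq i} h_{ij}^2 = 0$ in the diagonal frame, and minimality gives $\sum_j \lambda_j = 0$, so $\sum_{j \neq i} \lambda_j = -\lambda_i$ and hence $\sum_{j \neq i} h_{ii} h_{jj} = -\lambda_i^2$. Thus
\begin{equation*}
\mathrm{Ric}(e_i,e_i) = \sum_{j \neq i} \bar R_{ijij} - \lambda_i^2 .
\end{equation*}
The hypothesis that all sectional curvatures of $N$ are bounded below by $K$ yields $\sum_{j \neq i} \bar R_{ijij} \geq (n-1)K$.

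It remains to control $\lambda_i^2$ by $|A|^2 = \sum_j \lambda_j^2$. Using minimality once more, $\lambda_i = -\sum_{j \neq i} \lambda_j$, and the Cauchy--Schwarz inequality gives
\begin{equation*}
\lambda_i^2 = \Bigl(\sum_{j\neq i} \lambda_j\Bigr)^2 \leq (n-1) \sum_{j \neq i} \lambda_j^2 = (n-1)\bigl(|A|^2 - \lambda_i^2\bigr) .
\end{equation*}
Rearranging gives $n \lambda_i^2 \leq (n-1)|A|^2$, i.e.\ $\lambda_i^2 \leq \tfrac{n-1}{n}|A|^2$. Combining with the previous display yields
\begin{equation*}
\mathrm{Ric}(e_i, e_i) \geq (n-1)K - \tfrac{n-1}{n}|A|^2 ,
\end{equation*}
as required. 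Since $e_i$ was an arbitrary unit tangent vector at an arbitrary point $p$, the stated inequality holds on all of $M$.

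No step is genuinely hard here: the proof is a direct computation. The one point to be careful about is that the sharp constant $\tfrac{n-1}{n}$ (rather than the trivial $1$) uses both minimality and the Cauchy--Schwarz step above; without invoking $\sum_j \lambda_j = 0$ one would only get $\lambda_i^2 \leq |A|^2$, which would be too weak for the subsequent applications in the paper.
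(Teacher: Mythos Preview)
The paper does not prove this lemma; it is quoted with a citation to Leung and used as a black box. Your argument is the standard one and the content is correct, but there is a slip in the framing. You begin by fixing an \emph{arbitrary} unit tangent vector $e_i$ and then claim to extend it to an orthonormal frame that diagonalizes the second fundamental form at $p$. That is not generally possible: the diagonalizing frame consists of principal directions, which are determined by $A$, not chosen freely. What your computation actually establishes is $\mathrm{Ric}(e_i,e_i) \geq (n-1)K - \tfrac{n-1}{n}|A|^2$ for each \emph{principal direction} $e_i$, which by itself does not bound the Ricci form on all unit vectors.

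The fix is short. Take an arbitrary unit vector $v=e_1$ and complete it to any orthonormal frame (no diagonalization). The Gauss equation and minimality give, exactly as in your calculation,
\[
\mathrm{Ric}(v,v)=\sum_{j\neq 1}\bar R_{1j1j}-\sum_{j}h_{1j}^2 \;\geq\;(n-1)K-|Av|^2,
\]
where $A$ is the shape operator. Since $|Av|^2\le \max_i \lambda_i^2$ for unit $v$, and your Cauchy--Schwarz step shows $\lambda_i^2\le \tfrac{n-1}{n}|A|^2$ for every principal curvature (hence for the largest), the desired bound follows for all unit $v$. So the substance of your proof is right; only the order of quantifiers needs adjusting.
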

\noindent For harmonic $1$-forms, one has the Kato-type inequality as follows:
\begin{lemma} [\cite{Wang}]
Let $\omega$ is a harmonic $1$-form on an $n$-dimensional Riemannian manifold $M$, then
\begin{eqnarray}
 |\grad\omega|^2-|\grad|\omega||^2\geq\frac{1}{n-1}|\grad|\omega||^2 . \label{ineq:Kato}
\end{eqnarray}
\end{lemma}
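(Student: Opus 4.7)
The plan is to establish the refined Kato inequality via a standard pointwise computation in a well-chosen orthonormal frame, exploiting both the closedness and co-closedness conditions contained in ``$\omega$ is harmonic'' (i.e.\ $d\omega = 0$ and $\delta\omega = 0$).

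First, I would work pointwise at a point $p \in M$ where $|\omega|(p) \neq 0$; at points where $\omega$ vanishes the inequality is trivial (or follows by continuity from a nearby argument). I would choose a local orthonormal frame $\{e_1,\dots,e_n\}$ at $p$ such that $\omega(p) = |\omega|(p)\, e_1^*$; that is, writing $\omega = \sum_i \omega_i e_i^*$, we have $\omega_1(p) = |\omega|(p)$ and $\omega_i(p) = 0$ for $i \geq 2$. Denoting covariant derivatives by $\omega_{i,j} = (\nabla_{e_j}\omega)(e_i)$, differentiating $|\omega|^2 = \sum_i \omega_i^2$ at $p$ yields
$$|\omega|\, \nabla_k|\omega| = \sum_i \omega_i\, \omega_{i,k} = |\omega|\, \omega_{1,k},$$
so $\omega_{1,k} = \nabla_k|\omega|$ at $p$, and in particular $|\nabla|\omega||^2 = \sum_k \omega_{1,k}^2$. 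Consequently
$$|\nabla\omega|^2 - |\nabla|\omega||^2 = \sum_{i\geq 2,\, j} \omega_{i,j}^2.$$

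Next I would feed in the harmonicity. Closedness $d\omega=0$ gives the symmetry $\omega_{i,j} = \omega_{j,i}$, so $\omega_{i,1} = \omega_{1,i} = \nabla_i|\omega|$ for every $i$. Co-closedness $\delta\omega = -\sum_i \omega_{i,i} = 0$ gives $\omega_{1,1} = -\sum_{i\geq 2}\omega_{i,i}$, and combined with $\omega_{1,1} = \nabla_1|\omega|$ this yields
$$\nabla_1|\omega| = -\sum_{i\geq 2}\omega_{i,i}.$$
Applying Cauchy--Schwarz to the right-hand side, $(\nabla_1|\omega|)^2 \leq (n-1)\sum_{i\geq 2}\omega_{i,i}^2$, so
$$\sum_{i,j\geq 2}\omega_{i,j}^2 \;\geq\; \sum_{i\geq 2}\omega_{i,i}^2 \;\geq\; \frac{1}{n-1}(\nabla_1|\omega|)^2.$$

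Finally I would assemble the pieces. Splitting off the $j=1$ column and using $\omega_{i,1} = \nabla_i|\omega|$,
$$|\nabla\omega|^2 - |\nabla|\omega||^2 = \sum_{i\geq 2}\omega_{i,1}^2 + \sum_{i,j\geq 2}\omega_{i,j}^2 \;\geq\; \sum_{i\geq 2}(\nabla_i|\omega|)^2 + \frac{1}{n-1}(\nabla_1|\omega|)^2.$$
Since $1 \geq \tfrac{1}{n-1}$, bounding the first sum from below by $\tfrac{1}{n-1}\sum_{i\geq 2}(\nabla_i|\omega|)^2$ produces $\tfrac{1}{n-1}\sum_k (\nabla_k|\omega|)^2 = \tfrac{1}{n-1}|\nabla|\omega||^2$, which is the desired inequality \eqref{ineq:Kato}.

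The main obstacle is essentially bookkeeping: keeping track of which indices are privileged (the $e_1$ direction) and making sure both the closed and co-closed conditions are used at the right moments. Beyond that, the argument is a routine pointwise Cauchy--Schwarz estimate and needs no global or analytic ingredients.
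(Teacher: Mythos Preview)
Your argument is correct and is precisely the standard proof of the refined Kato inequality for harmonic $1$-forms. Note, however, that the paper does not actually supply its own proof of this lemma: it is stated with a citation to \cite{Wang} and used as a black box, so there is no in-paper argument to compare against. Your pointwise frame computation, using $d\omega=0$ for the symmetry $\omega_{i,j}=\omega_{j,i}$ and $\delta\omega=0$ for the trace identity $\omega_{1,1}=-\sum_{i\ge 2}\omega_{i,i}$ followed by Cauchy--Schwarz, is exactly the approach one finds in the literature (e.g.\ Wang's paper and the broader refined-Kato framework), and it goes through without gaps.
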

\noindent It is well known that the following Sobolev inequality on a minimal submanifold in a nonpositively curved manifold.
\begin{lemma}[\cite{HS}]
Let $M^n$ be a complete immersed minimal submanifold in a nonpositively curved manifold
${N}^{n+p}$, $n \geq 3$. Then for any $\phi \in W_0
^{1,2}(M)$ we have
\begin{eqnarray}
\Big(\int_M |\phi|^{\frac{2n}{n-2}} dv\Big)^{\frac{n-2}{n}} \leq C_s
\int_M |\nabla \phi|^2 dv,  \label{ineq:HS}
\end{eqnarray}
where $C_s$ is the Sobolev constant which dependents only on $n \geq 3$.
\end{lemma}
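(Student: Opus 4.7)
The plan is to deduce the Sobolev inequality from an isoperimetric inequality for minimal submanifolds in nonpositively curved ambients, following the classical strategy of Hoffman and Spruck. The first ingredient is a monotonicity formula: for $x\in N$ and $r(y)=d_N(x,y)$, the Hessian comparison theorem in nonpositive curvature gives $\nabla^2 r^2 \geq 2 g_N$ at points off the cut locus. Taking the trace along $TM$ and using that $M$ is minimal (so the mean-curvature contribution to $\Delta^M r^2$ vanishes), one obtains $\Delta^M r^2 \geq 2n$ pointwise on $M$. Integrating by parts over $M \cap B_R(x)$ and manipulating the resulting inequality then shows that $R \mapsto R^{-n}|M \cap B_R(x)|$ is nondecreasing.

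From this monotonicity I would extract a Euclidean-type isoperimetric inequality of the form $|\partial\Omega|^n \geq c_n |\Omega|^{n-1}$ for compact domains $\Omega \subset M$ with smooth boundary, by a Federer--Fleming type covering argument: one covers $\Omega$ by balls whose radius is calibrated so that the monotonicity gives a lower density bound, and then converts the volume estimate into a boundary-measure estimate. The Sobolev inequality itself then follows by the coarea formula. Applied to $\psi := |\phi|^{2(n-1)/(n-2)}$ and its superlevel sets $A_t = \{\psi > t\}$,
$$ \intl_M |\nabla \psi|\, dv \;=\; \intl_0^\infty |\partial A_t|\, dt \;\geq\; c_n^{1/n} \intl_0^\infty |A_t|^{(n-1)/n}\, dt. $$
Estimating the left-hand side by Cauchy--Schwarz as $C\,\|\phi\|_{L^{2n/(n-2)}}^{(n-1)/(n-2)}\,\|\nabla \phi\|_{L^2}$ and recognizing the right-hand side as controlling $\|\phi\|_{L^{2n/(n-2)}}^{2(n-1)/(n-2)}$ through the layer-cake identity, one arrives at the stated $L^{2n/(n-2)}$ Sobolev inequality after rearrangement.

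The main obstacle is the isoperimetric step. The monotonicity formula and the coarea manipulation are largely formal once the Hessian comparison is in hand, but extracting the sharp exponent $(n-1)/n$ with an explicit dimensional constant requires care with the covering and with truncation of the distance function near the cut locus. This is precisely where the nonpositive curvature hypothesis is used in an essential way, since it is what underwrites the comparison inequality $\nabla^2 r^2 \geq 2 g_N$ and hence the density monotonicity. Since the statement is a classical lemma imported verbatim from \cite{HS}, in practice I would cite the full covering argument there rather than reproduce it.
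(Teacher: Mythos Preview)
The paper does not prove this lemma at all; it is quoted verbatim from Hoffman--Spruck \cite{HS} as a known input and used as a black box in later sections. So there is no ``paper's own proof'' to compare against---your sketch is already more than the paper provides.

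That said, your outline is a reasonable synopsis of the Michael--Simon/Hoffman--Spruck strategy, and you correctly identify the isoperimetric step as the nontrivial part. One small caveat: the original Hoffman--Spruck argument does not literally pass through the monotonicity formula $R\mapsto R^{-n}|M\cap B_R(x)|$ and then a Federer--Fleming covering; rather, it adapts Michael--Simon's direct covering argument, in which one chooses for each point a stopping radius where a certain density ratio first drops below a threshold, and the Hessian comparison $\Delta^M r^2 \geq 2n$ (your step) is precisely what replaces the Euclidean identity $\Delta r^2 = 2n$ in that argument. Your coarea-to-Sobolev step with $\psi = |\phi|^{2(n-1)/(n-2)}$ is standard and correct. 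Since you already concede you would cite \cite{HS} for the covering argument, your proposal is effectively what the paper does: invoke the reference.
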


\noindent If $M\subset \mathbb{H}^{n+1}$ is a complete stable minimal hypersurface with large first eigenvalue, then we have the nonexistence theorem of $L^2$ harmonic $1$-forms on $M$. More precisely, we have
\begin{theorem*}[\cite{Seo2010}]
Let $M$ be a complete stable minimal hypersurface in $\mathbb{H}^{n+1}$ satisfying that
\begin{eqnarray*}
(2n-1)(n-1) < \lambda_1 (M) .
\end{eqnarray*}
Then there are no nontrivial $L^2$ harmonic 1-forms on $M$.
\end{theorem*}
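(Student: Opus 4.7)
My plan is to run a Bochner--Kato--stability argument, standard for $L^2$ vanishing theorems on minimal hypersurfaces. Let $\omega$ be a nontrivial $L^2$ harmonic $1$-form on $M$, so that $\Delta\omega=0$ and $\intl_M|\omega|^2<\infty$. The starting point is the refined Bochner identity
\[
|\omega|\Delta|\omega| \;=\; |\grad\omega|^2 - |\grad|\omega||^2 + \Ric_M(\omega,\omega),
\]
into which I would substitute the Kato inequality \eqref{ineq:Kato} and Leung's Ricci bound (Lemma \ref{lem:Leung}) with ambient lower curvature $K=-1$, yielding the pointwise inequality
\[
|\omega|\Delta|\omega| \;\geq\; \frac{1}{n-1}|\grad|\omega||^2 - (n-1)|\omega|^2 - \frac{n-1}{n}|A|^2|\omega|^2.
\]

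Next I would multiply by $f^2$, where $f$ is a cutoff supported in $B(R)\subset M$ with $f\equiv 1$ on $B(R/2)$ and $|\grad f|\leq 2/R$, integrate over $M$, and use the divergence theorem to trade $\intl f^2|\omega|\Delta|\omega|$ for $-\intl f^2|\grad|\omega||^2-2\intl f|\omega|\langle\grad|\omega|,\grad f\rangle$. The $|A|^2|\omega|^2$ contribution will be controlled by the stability inequality \eqref{ineq:stability} applied to $\phi=f|\omega|$: since $\overline{\Ric}(e_{n+1})=-n$ in $\mathbb{H}^{n+1}$, stability gives
\[
\intl_M f^2|A|^2|\omega|^2 \;\leq\; \intl_M |\grad(f|\omega|)|^2 + n\intl_M f^2|\omega|^2,
\]
while the remaining $\intl_M f^2|\omega|^2$ term is controlled by the Rayleigh-quotient inequality
\[
\lambda_1(M)\intl_M f^2|\omega|^2 \;\leq\; \intl_M |\grad(f|\omega|)|^2.
\]

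After expanding $|\grad(f|\omega|)|^2$ and absorbing the cross terms via Young's inequality with a small parameter $\eps>0$, the four ingredients combine into an estimate of the schematic form
\[
\Bigl(A(n)-\frac{B(n)}{\lambda_1(M)}\Bigr)\intl_M f^2|\grad|\omega||^2 \;\leq\; C(n,\eps)\intl_M |\omega|^2|\grad f|^2,
\]
in which the positive constants $A(n),B(n)$ are arranged so that the bracketed factor on the left is strictly positive precisely when $(2n-1)(n-1)<\lambda_1(M)$. Sending $R\to\infty$ makes the right-hand side vanish, because $|\grad f|\leq 2/R$ and $\intl_M|\omega|^2<\infty$; hence $\grad|\omega|\equiv 0$, so $|\omega|$ is constant. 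Since $M$ is noncompact of infinite volume and $\omega\in L^2$, this forces $\omega\equiv 0$, contradicting the choice of $\omega$.

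The main obstacle I anticipate is the coefficient bookkeeping in the last display: the factor $\frac{1}{n-1}$ from Kato, the $(n-1)$ and $\frac{n-1}{n}|A|^2$ contributions from Leung, the $-n$ normal Ricci term from stability, the Rayleigh factor $1/\lambda_1(M)$, and the Young parameter $\eps$ must all be balanced so that the spectral threshold comes out to precisely $(2n-1)(n-1)$ and not some weaker value. Once this balancing is done, the cutoff step and the conclusion $\omega\equiv 0$ are routine.
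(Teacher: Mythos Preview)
Your proposal is correct and follows essentially the same Bochner--Kato--stability--Rayleigh-quotient scheme that the paper uses in its proof of the generalization Theorem~\ref{l2} (which specializes to the cited result when $K=-1$). The only minor difference is in the endgame: the paper retains the extra term $-\frac{1}{n}\intl_M|A|^2|\omega|^2\phi^2$ in inequality~\eqref{e23k} and concludes simultaneously that $|\grad|\omega||\equiv 0$ and $|A|^2|\omega|^2\equiv 0$, whereas you drop that term and instead invoke infinite volume (legitimate here since $\lambda_1(M)>0$) to pass from $|\omega|$ constant to $\omega\equiv 0$.
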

 In this section, we shall generalize the above theorem to a complete stable minimal hypersurface in a Riemannian manifold with sectional curvature bounded below by a nonpositive constant as follows:
\begin{theorem}\label{l2}
Let $N$ be $(n+1)$-dimensional Riemannian manifold with sectional curvature $K_N$ satisfying $K\leq K_N$
where $K \leq 0$ is a constant. Let $M$ be a complete noncompact stable non-totally geodesic minimal hypersurface in $N$. Assume that
$$ -K(2n-1)(n-1)<\lambda_1(M) $$
Then there are no nontrivial $L^2$ harmonic 1-forms on $M$.
\end{theorem}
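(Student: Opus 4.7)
The plan is to execute the standard Bochner--Yau argument for $L^2$ harmonic $1$-forms, using the stability inequality and the variational characterization of $\lambda_1(M)$ to absorb the two potentially negative terms that arise from the second fundamental form and from the ambient Ricci bound.

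Let $\omega$ be a nontrivial $L^2$ harmonic $1$-form on $M$. Combining the Bochner formula $\frac{1}{2}\Delta|\omega|^2=|\nabla\omega|^2+\Ric(\omega,\omega)$ with the refined Kato inequality \eqref{ineq:Kato} and the Ricci lower bound from Lemma~\ref{lem:Leung}, I obtain the pointwise estimate
$$|\omega|\Delta|\omega|\geq\frac{1}{n-1}|\nabla|\omega||^2+(n-1)K|\omega|^2-\frac{n-1}{n}|A|^2|\omega|^2.$$
Multiplying by a compactly supported Lipschitz cutoff $f^2$, integrating over $M$, and integrating by parts on the Laplacian term, I obtain a first master inequality whose only undesirable term is $\frac{n-1}{n}\int f^2|A|^2|\omega|^2$. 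I kill this piece by feeding the stability inequality \eqref{ineq:stability} the test function $\phi=f|\omega|$; the sectional curvature hypothesis gives $\overline{\Ric}(e_{n+1})\geq nK$, hence
$$\intl_M f^2|A|^2|\omega|^2\leq \intl_M|\nabla(f|\omega|)|^2-nK\intl_M f^2|\omega|^2.$$
The remaining undesirable quantity $-2(n-1)K\int f^2|\omega|^2\geq 0$ I then control by the variational inequality $\lambda_1(M)\int f^2|\omega|^2\leq\int|\nabla(f|\omega|)|^2$; the non-positivity of $K$ is precisely what makes the multiplier $-2(n-1)K$ non-negative, so this substitution preserves the direction of the inequality.

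Expanding $|\nabla(f|\omega|)|^2$ and controlling the cross terms $f|\omega|\langle\nabla f,\nabla|\omega|\rangle$ with Young's inequality in a small parameter $\varepsilon>0$, I expect to arrive at a Caccioppoli-type estimate of the shape
$$\Bigl[C_0(n)+\frac{2(n-1)K}{\lambda_1(M)}(1+\varepsilon)-\frac{\varepsilon}{n}\Bigr]\intl_M f^2|\nabla|\omega||^2\leq C_\varepsilon\intl_M|\omega|^2|\nabla f|^2,$$
whose bracketed coefficient becomes strictly positive for sufficiently small $\varepsilon$ under the stated hypothesis on $\lambda_1(M)$. Inserting the standard radial cutoff $f\equiv 1$ on $B(R)$, $f\equiv 0$ off $B(2R)$, $|\nabla f|\leq 2/R$, the right-hand side tends to zero as $R\to\infty$ since $\omega\in L^2(M)$, so $\nabla|\omega|\equiv 0$ and $|\omega|$ is constant on $M$. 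Because $\lambda_1(M)>-K(2n-1)(n-1)\geq 0$ and any complete noncompact Riemannian manifold of finite volume has $\lambda_1=0$ (by a standard cutoff argument), $M$ must have infinite volume; combined with $\omega\in L^2(M)$ this forces $|\omega|\equiv 0$.

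The main obstacle is the careful bookkeeping in the Young-inequality step: the threshold on $\lambda_1(M)$ is delicate, and one must simultaneously track the combinatorial constants produced by Bochner plus Kato, by the stability substitution for $|A|^2$, and by the eigenvalue substitution for $|\omega|^2$, so that the coefficient of $\int f^2|\nabla|\omega||^2$ remains strictly positive under exactly the stated hypothesis. The non-positivity of $K$ is essential both for the correct direction in the eigenvalue step and for ensuring $\lambda_1(M)>0$.
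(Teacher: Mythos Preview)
Your argument is correct and follows the same Bochner--stability--eigenvalue--cutoff template as the paper, but the bookkeeping and the endgame differ in a way worth noting. The paper substitutes the \emph{full} stability inequality into the integrated Bochner identity, which leaves a surplus term $-\tfrac{1}{n}\int_M |A|^2\phi^2|\omega|^2$ on the good side; combining with the eigenvalue estimate then yields the coefficient $-(2n-1)K$ in front of $\int\phi^2|\omega|^2$ (not your $-2(n-1)K$), and after letting $R\to\infty$ the paper obtains \emph{both} $\int|\nabla|\omega||^2=0$ and $\int|A|^2|\omega|^2=0$, whence the non-totally-geodesic hypothesis forces $\omega\equiv 0$. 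You instead scale the stability inequality by $\tfrac{n-1}{n}$ so as to cancel the $|A|^2$ term exactly; this produces the coefficient $-2(n-1)K$, and a routine check shows your resulting Caccioppoli bracket is positive once $\lambda_1(M)>\tfrac{2n(n-1)^2}{2n-1}(-K)$, which is strictly weaker than the stated hypothesis $(2n-1)(n-1)(-K)$. Your endgame (constant $|\omega|$ plus $\lambda_1(M)>0\Rightarrow$ infinite volume $\Rightarrow|\omega|\equiv 0$) is valid and, notably, does not use the non-totally-geodesic assumption at all---so your route actually proves a slightly sharper statement than the one in the paper.
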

\begin{proof}
Let $\omega$ be an $L^2$ harmonic $1$-form on M, i.e.,
\begin{eqnarray*}
\Delta \omega = 0 \ \ {\rm and}\ \  \int_M |\omega|^2 dv < \infty .
\end{eqnarray*}
In an abuse of notation, we will refer to a harmonic $1$-form and its dual harmonic vector field both by $\omega$. From Bochner formula, it follows
$$ \Delta|\omega|^2=2(|\grad\omega|^2+\text{Ric}(\omega, \omega)) .$$
On the other hand, one sees that
$$ \Delta|\omega|^2=2(|\omega|\Delta|\omega|+|\grad|\omega||^2). $$
Thus we obtain
$$ |\omega|\Delta|\omega|-\text{Ric}(\omega, \omega)=|\grad\omega|^2-|\grad|\omega||^2. $$
Applying Lemma \ref{lem:Leung} and Kato-type inequality (\ref{ineq:Kato}) yields
\begin{equation}\label{e21k} |\omega|\Delta|\omega|+\frac{n-1}{n}|A|^2|\omega|^2-(n-1)K|\omega|^2\geq\frac{1}{n-1}|\grad|\omega||^2.
\end{equation}
The stability of $M$ implies that
$$ \intl_M\Big(|\grad\phi|^2-(|A|^2+\over{\text{Ric}}(e_{n+1}))\phi^2 \Big)\geq0 $$
for any compactly supported Lipschitz function $\phi$ on $M$. Since $nK\leq \over{\text{Ric}}(e_{n+1})$, we have
$$ \intl_M\Big(|\grad\phi|^2-(|A|^2+nK)\phi^2\Big)\geq0 $$
 Replacing $\phi$ by $|\omega|\phi$ gives
$$ \intl_M\Big(|\grad(|\omega|\phi)|^2-(|A|^2+nK)(|\omega|\phi)^2\Big)\geq0 .$$
Applying the divergence theorem, we get
 \begin{align}
0&\leq -\intl_M|\omega|\phi\Delta(|\omega|\phi)-\intl_M(|A|^2+nK)|\omega|^2\phi^2\notag\\
&=-\intl_M\phi|\omega|^2\Delta\phi-\intl_M\phi^2(|\omega|\Delta|\omega|+|A|^2|\omega|^2)
-2\intl_M\phi|\omega|\left\langle\grad|\omega|, \grad\phi\right\rangle-nK\intl_M\phi^2|\omega|^2\notag\\
&=\intl_M\left\langle\grad(\phi|\omega|^2), \grad\phi \right\rangle -\intl_M\phi^2(|\omega|\Delta|\omega|+|A|^2|\omega|^2)-2\intl_M\phi|\omega|\left\langle\grad|\omega|, \grad\phi\right\rangle-nK\intl_M\phi^2|\omega|^2\notag\\
&=\intl_M|\omega|^2|\grad\phi|^2-\intl_M\phi^2(|\omega|\Delta|\omega|+|A|^2|\omega|^2)-nK\intl_M\phi^2|\omega|^2\label{e22k}
\end{align}
Using the inequalities \eqref{e21k} and \eqref{e22k}, we obtain
\begin{align}
0\leq\intl_M|\omega|^2|\grad\phi|^2-(2n-1)K\intl_M\phi^2|\omega|^2-\frac{1}{n-1}\intl_M\phi^2|\grad|\omega||^2-\frac{1}{n}\intl_M|A|^2\phi^2|\omega|^2\label{e23k}
\end{align}
From the definition of the bottom of the spectrum, it follows
\begin{align}
\lambda_1(M)\intl_M\phi^2|\omega|^2
&\leq\intl_M|\grad(\phi|\omega|)|^2\notag\\
&=\intl_M|\omega|^2|\grad\phi|^2+\phi^2|\grad|\omega||^2+2|\omega|\phi\left\langle \grad\phi, \grad|\omega|\right\rangle \notag\\
&\leq\left(1+\frac{1}{\eps}\right)\intl_M|\omega|^2|\grad\phi|^2+(1+\eps)\intl_M|\grad|\omega||^2\phi^2 , \label{e24k}
\end{align}
where we used Schwarz inequality and Young's inequality for $\eps>0$ in the last inequality.
Combining the inequalities \eqref{e23k} and \eqref{e24k}, we have
\begin{align*}
0\leq &\left\{\left(1+\frac{1}{\eps}\right)\frac{(2n-1)(-K)}{\lambda_1}+1\right\}\intl_M|\omega|^2|\grad\phi|^2\\
&+\left\{(1+\eps)\frac{(2n-1)(-K)}{\lambda_1(M)}-\frac{1}{n-1}\right\}\intl_M|\grad|\omega||^2\phi^2-\frac{1}{n}\intl_M|A|^2|\omega|^2\phi^2.
\end{align*}
Now fix a point $p\in M$ and consider a geodesic ball $B_p(R)$ of radius $R$ centered at $p$. Choose a test function $\phi$
satisfying that $0 \leq \phi \leq 1$, $\phi \equiv 1$ on $B_p(R)$, $\phi \equiv 0 $ on $M \setminus B_p(2R)$, and
$\displaystyle{|\nabla \phi| \leq \frac{1}{R}}$.  Since $\lambda_1(M)>-K(2n-1)(n-1)$, one can choose a sufficiently small $\eps>0$ such that
\begin{eqnarray*}
(1+\eps)\frac{(2n-1)(-K)}{\lambda_1(M)}-\frac{1}{n-1}<0 .
\end{eqnarray*}
Letting $R\to\infty$ and using the fact that $\intl_M |\omega|^2 < \infty$, we finally obtain
$$ \intl_M|\grad|\omega||^2\phi^2= \intl_M|A|^2|\omega|^2\phi^2=0 ,$$
which implies that either $\omega\equiv0$ or $|A|\equiv0$. Since $M$ is not totally geodesic by the assumption, we see that $\omega\equiv 0$. Therefore we get the conclusion.
\end{proof}

%%%%%%%%%%%%%%%%%%%%%%%%%%%%%%%%%%%%%%%%%%%%%%%%%%%%%%%
\section{Rigidity of minimal hypersurfaces}
%%%%%%%%%%%%%%%%%%%%%%%%%%%%%%%%%%%%%%%%%%%%%%%%%%%%%%%

Let $u$ be a harmonic function on $M$. Bochner's formula says
$$ \frac{1}{2}\Delta(|\grad u|^2)=\suml_{i,j=1}^nu_{ij}^2+\text{Ric}(\grad u, \grad u). $$
Since
$$ \frac{1}{2}\Delta(|\grad u|^2)=|\grad u|\Delta(|\grad u|)+|\grad|\grad u||^2, $$
we obtain
$$ |\grad u|\Delta(|\grad u|)+|\grad|\grad u||^2 =\suml_{i,j=1}^nu_{ij}^2+\text{Ric}(\grad u, \grad u).$$
Using Lemma \ref{lem:Leung} and Kato-type inequality we get
\begin{equation}\label{e22s4}
|\grad u|\Delta|\grad u|+\frac{n-1}{n}|A|^2|\grad u|^2-(n-1)K_1|\grad u|^2\geq\frac{1}{n-1}|\grad|\grad u||^2.
\end{equation}
By using the same arguments as in the proof of Theorem \ref{l2} in Section 3, we shall prove that if $M$ is an $n$-dimensional complete noncompact stable minimal hypersurface in a Riemannian manifold with sectional curvature bounded below by a nonpositive constant, then $M$ has only one end. More precisely, we prove
\begin{theorem}
Let $N$ be $(n+1)$-dimensional Riemannian manifold with sectional curvature $K_N$ satisfying $K\leq K_N$
where $K \leq 0$ is a constant. Let $M$ be a complete noncompact stable non-totally geodesic minimal hypersurface in $N$. Assume that
$$ -K(2n-1)(n-1)<\lambda_1(M) $$
Then $M$ must have only one end.
\end{theorem}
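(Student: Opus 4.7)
The plan is to argue by contradiction, reducing the one-end statement to a vanishing statement and then reusing verbatim the machinery developed in the proof of Theorem \ref{l2}. Suppose, toward a contradiction, that $M$ has at least two ends. Since $\lambda_1(M) > -K(2n-1)(n-1) \geq 0$, the manifold $M$ is nonparabolic. By the classical construction of Li--Tam for nonparabolic complete manifolds with more than one end, there exists a nonconstant bounded harmonic function $u$ on $M$ whose gradient has finite $L^2$-norm: $\int_M |\nabla u|^2\,dv < \infty$.

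Since $u$ is harmonic, $du$ is a harmonic $1$-form. The Bochner identity, combined with Lemma \ref{lem:Leung} and the Kato-type inequality (\ref{ineq:Kato}), produces exactly the differential inequality (\ref{e22s4}) for $|\nabla u|$ in place of $|\omega|$. I would then run the test $\phi|\nabla u|$ through the stability inequality and through the Rayleigh quotient for $\lambda_1(M)$, mirroring estimates (\ref{e22k}), (\ref{e23k}) and (\ref{e24k}) line by line. After applying Young's inequality for $\varepsilon > 0$, the outcome is
\begin{align*}
0 &\leq \Big\{\bigl(1+\tfrac{1}{\varepsilon}\bigr)\tfrac{(2n-1)(-K)}{\lambda_1(M)}+1\Big\}\int_M|\nabla u|^2|\nabla\phi|^2 \\
&\quad + \Big\{(1+\varepsilon)\tfrac{(2n-1)(-K)}{\lambda_1(M)}-\tfrac{1}{n-1}\Big\}\int_M|\nabla|\nabla u||^2\phi^2 - \tfrac{1}{n}\int_M|A|^2|\nabla u|^2\phi^2.
\end{align*}

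The hypothesis $-K(2n-1)(n-1)<\lambda_1(M)$ lets me choose $\varepsilon$ small enough so that the bracket multiplying $\int|\nabla|\nabla u||^2\phi^2$ is strictly negative. With the standard cutoff $\phi$ satisfying $\phi\equiv1$ on $B_p(R)$, $\phi\equiv0$ off $B_p(2R)$ and $|\nabla\phi|\leq1/R$, the first term tends to zero as $R\to\infty$ because $|\nabla u|\in L^2(M)$. Letting $R\to\infty$ forces both $\int_M|\nabla|\nabla u||^2=0$ and $\int_M|A|^2|\nabla u|^2=0$. Since $M$ is not totally geodesic by assumption, $|A|\not\equiv0$ on some open set, hence $|\nabla u|\equiv0$ there; but $|\nabla u|$ is also constant on $M$ by the first vanishing, so $u$ is constant on $M$, contradicting the choice of $u$.

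The analytic portion is essentially a transcription of the argument for Theorem \ref{l2}, with $|\omega|$ replaced by $|\nabla u|$; the main (and only non-routine) obstacle is the external input, namely verifying nonparabolicity of $M$ from the positivity of $\lambda_1$ and then invoking the Li--Tam construction to produce a nonconstant bounded harmonic function with finite Dirichlet energy on a manifold with at least two ends. Once $u$ is in hand, the rest is a direct rerun of the Bochner--stability estimate already carried out in Section 3.
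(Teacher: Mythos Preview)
Your proposal is correct and follows essentially the same approach as the paper: assume at least two ends, produce a nonconstant harmonic function with finite Dirichlet energy, and rerun the Bochner--stability argument of Theorem~\ref{l2} with $|\omega|$ replaced by $|\nabla u|$. The only difference is cosmetic---the paper cites Wei~\cite{Wei} (see also~\cite{CSZ}) for the existence of the harmonic function rather than invoking nonparabolicity and Li--Tam directly, but the underlying mechanism is the same.
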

\begin{proof}Suppose that $M$ has at least two ends. If $M$ has more than one end then there exists a non-trivial harmonic function $u$ on $M$ with finite total energy.(\cite{Wei}, see also \cite{CSZ}.) Then the harmonic function $u$ satisfies the inequality \eqref{e22s4}. Applying the same arguments as in the proof of Theorem \ref{l2}, we conclude that $|\grad u|=0$. Hence $u$ is constant, which is a contradiction. Therefore $M$ should have only one end.
\end{proof}

Even without the stability condition on a complete minimal hypersurface we have a gap theorem for minimal hypersurfaces with finite total scalar curvature as follows:
\begin{theorem} \label{thm:one end}
Let $N$ be an $(n+1)$-dimensional Riemannian manifold with sectional curvature satisfying
$$ K_1\leq K_N\leq K_2, $$
where $K_1, K_2$ are constants and $K_1\leq K_2<0$. Let $M$ be a complete minimal hypersurface in $N$. If
$$ \left(\int_M|A|^n\right)^{\frac{1}{n}}\leq\frac{1}{n-1}\sqrt[]{\frac{n(nK_2-4K_1)}{K_2}C_s^{-1}} $$
for $n>4\frac{K_1}{K_2}$, then $M$ has only one end. (Here $C_s$ is the Sobolev constant in \cite{HS}.)
\end{theorem}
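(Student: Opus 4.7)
The plan is to argue by contradiction, replacing the stability inequality used in the previous theorem by the Hoffman--Spruck Sobolev inequality \eqref{ineq:HS}. Suppose $M$ has at least two ends. Then, by the Cheng--Sung--Zhang/Wei construction (\cite{CSZ,Wei}), there exists a nonconstant harmonic function $u$ on $M$ with $\int_M|\nabla u|^2<\infty$. Setting $f=|\nabla u|$, the same Bochner--Leung--Kato computation that produced \eqref{e22s4} yields
\[
f\Delta f+\tfrac{n-1}{n}|A|^2f^2-(n-1)K_1f^2\geq \tfrac{1}{n-1}|\nabla f|^2,
\]
and the goal is to force $f\equiv 0$, contradicting the nonconstancy of $u$.

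I would multiply by a cutoff $\eta^2$ with $\eta\equiv 1$ on $B(R)$, $\eta\equiv 0$ outside $B(2R)$ and $|\nabla\eta|\leq 1/R$, and integrate by parts to obtain
\[
\tfrac{n}{n-1}\int_M\eta^2|\nabla f|^2\leq \tfrac{n-1}{n}\int_M\eta^2|A|^2f^2-(n-1)K_1\int_M\eta^2f^2-2\int_M\eta f\langle\nabla f,\nabla\eta\rangle.
\]
Both positive terms on the right must then be reabsorbed into $\int\eta^2|\nabla f|^2$. For the $|A|^2$ term, H\"older together with \eqref{ineq:HS} yields
\[
\int_M\eta^2|A|^2f^2\leq C_s\Bigl(\int_M|A|^n\Bigr)^{2/n}\int_M|\nabla(\eta f)|^2,
\]
while for the $-K_1$ term, which carries the unfavorable sign since $K_1<0$, I would exploit the upper pinching $K_N\leq K_2<0$: the Bessa--Montenegro lower bound gives $\lambda_1(M)\geq -K_2(n-1)^2/4$, hence
\[
-(n-1)K_1\int_M\eta^2f^2\leq \tfrac{-(n-1)K_1}{\lambda_1(M)}\int_M|\nabla(\eta f)|^2\leq \tfrac{4K_1}{(n-1)K_2}\int_M|\nabla(\eta f)|^2.
\]
Expanding $|\nabla(\eta f)|^2=\eta^2|\nabla f|^2+f^2|\nabla\eta|^2+2\eta f\langle\nabla f,\nabla\eta\rangle$ and absorbing the cross terms via Young's inequality with an auxiliary parameter $\delta$, I arrive at
\[
\Bigl[\tfrac{n}{n-1}-\tfrac{n-1}{n}C_s\|A\|_n^2-\tfrac{4K_1}{(n-1)K_2}-o_\delta(1)\Bigr]\int_M\eta^2|\nabla f|^2\leq \tfrac{C}{R^2}\int_{B(2R)\setminus B(R)}f^2.
\]

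The total-curvature hypothesis $\|A\|_n\leq\tfrac{1}{n-1}\sqrt{n(nK_2-4K_1)/(K_2C_s)}$ is calibrated exactly so that $\tfrac{n-1}{n}C_s\|A\|_n^2\leq\tfrac{nK_2-4K_1}{(n-1)K_2}=\tfrac{n}{n-1}-\tfrac{4K_1}{(n-1)K_2}$, while the condition $n>4K_1/K_2$ is what renders the latter quantity positive. Letting first $\delta\to 0$ and then $R\to\infty$, and using $\int_Mf^2<\infty$, the right-hand side tends to $0$ and the bracket on the left becomes nonnegative. Hence $\nabla f\equiv 0$, so $f$ is constant; combined with $\int_Mf^2<\infty$ and the infinite volume of $M$ (standard for a complete minimal hypersurface in an ambient manifold with $K_N\leq K_2<0$), this forces $f\equiv 0$, contradicting the nonconstancy of $u$.

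The main technical obstacle is the fine bookkeeping of the Young parameters: the numerical constant in the total-curvature hypothesis is precisely the value at which the leading bracket degenerates, so no slack is available, and the borderline equality case requires observing that $\int_{M\setminus B(R_0)}|A|^n\to 0$ as $R_0\to\infty$, which restores strict positivity of the bracket for cutoffs supported outside a sufficiently large ball.
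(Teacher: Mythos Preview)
Your argument is correct and follows essentially the same route as the paper: the contradiction setup via Wei's harmonic function, the Bochner--Leung--Kato inequality \eqref{e22s4}, the use of the Bessa--Montenegro lower bound $\lambda_1(M)\geq -K_2(n-1)^2/4$ to absorb the bad $-(n-1)K_1$ term, and H\"older combined with the Sobolev inequality \eqref{ineq:HS} to control the $|A|^2$ term, are exactly the paper's ingredients. The only organizational difference is the endgame: the paper carries the Sobolev norm $\bigl(\int_M(f\phi)^{2n/(n-2)}\bigr)^{(n-2)/n}$ as the quantity to be driven to zero and thereby concludes $f\equiv 0$ directly, whereas you isolate $\int_M\eta^2|\nabla f|^2$, obtain $\nabla f\equiv 0$, and then invoke infinite volume to finish---both are valid, though the paper's version avoids the extra step. (Your remark about the borderline equality case is apt; the paper's own proof tacitly uses strict inequality at the same point.)
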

\begin{proof}Suppose that $M$ has at least two ends. From Wei's result \cite{Wei}, it follows that if $M$ has more than one end then there exists a non-trivial harmonic function $u$ on $M$ with finite total energy. Let $\phi$ be a compactly supported Lipschitz function on $M$. Put $f:=|\grad u|$ in the inequality \eqref{e22s4}. Multiplying both sides of the inequality \eqref{e22s4} by $\phi^2$ and integrating over $M$, we obtain
$$ \intl_M\phi^2f\Delta f+\frac{n-1}{n}\intl_M\phi^2|A|^2f^2-(n-1)K_1\intl_M\phi^2f^2\geq\frac{1}{n-1}\intl_M\phi^2|\grad f|^2. $$
By the definition of $\lambda_1(M)$ and Theorem \ref{t1}, we have
$$ \frac{(n-1)^2}{4}(-K_2)\leq\lambda_1(M)\leq\frac{\int_M|\grad(\phi f)|^2}{\int_M\phi^2f^2}. $$
Hence
$$\intl_M\phi^2f\Delta f+\frac{n-1}{n}\intl_M\phi^2|A|^2f^2+\frac{4K_1}{(n-1)K_2}\intl_M|\grad(\phi f)|^2\geq\frac{1}{n-1}\intl_M\phi^2|\grad f|^2.$$
Integration by parts gives
 \begin{align*}
{}&-\intl_M|\grad f|^2\phi^2-2\intl_Mf\phi\left\langle\grad f, \grad\phi\right\rangle+ \frac{n-1}{n}\intl_M\phi^2|A|^2f^2\\
&{~~~~~}+\frac{4K_1}{(n-1)K_2}\left(\intl_Mf^2|\grad\phi |^2+\intl_M\phi^2|\grad f|^2+2\intl_Mf\phi\left\langle\grad f, \grad\phi\right\rangle \right)\geq\frac{1}{n-1}\intl_M\phi^2|\grad f|^2 .
\end{align*}
Using  Schwarz inequality, for any $\eps>0$, we have
\begin{align}
{}&\frac{n-1}{n}\intl_M\phi^2|A|^2f^2+\left(\frac{4K_1}{(n-1)K_2}+\frac{1}{\eps}+\frac{4K_1}{(n-1)\eps K_2}\right)\intl_Mf^2|\grad\phi|^2\notag\\
&\geq\left(\frac{n}{n-1}-\eps-\frac{4K_1}{(n-1)K_2}-\frac{4K_1\eps}{(n-1)K_2}\right)\intl_M\phi^2|\grad f|^2 .\label{ehba}
\end{align}
On the other hand, by Schwarz inequality, for any $\eta>0$, we see that
$$ (1+\eta)\intl_M\phi^2|\grad f|^2+\left(1+\frac{1}{\eta}\right)\intl_Mf^2|\grad\phi|^2\geq\intl_M|\grad(f\phi)|^2. $$
From the Sobolev inequality (\ref{ineq:HS}), it follows
\begin{align}
(1+\eta)\intl_M\phi^2|\grad f|^2+\left(1+\frac{1}{\eta}\right)\intl_Mf^2|\grad\phi|^2\geq C_s^{-1}\left(\intl_M(f\phi)^{\frac{2n}{n-2}}\right)^{\frac{n-2}{n}},\label{ehbon}
\end{align}
where $C_s$ is the Sobolev constant.

For a fixed point $p\in M$ and $R>0$, choose a test function $\phi$
satisfying that $0 \leq \phi \leq 1$, $\phi \equiv 1$ on $B_p(R)$, $\phi \equiv 0 $ on $M \setminus B_p(2R)$, and
$\displaystyle{|\nabla \phi| \leq \frac{1}{R}}$, where $B_p(R)$ denotes the geodesic ball of radius $R$ centered at $p\in M$. Define
$$ \Gamma(\eps, R):= \left(\frac{4K_1}{(n-1)K_2}+\frac{1}{\eps}+\frac{4K_1}{(n-1)\eps K_2}\right)\intl_Mf^2|\grad\phi|^2$$
and
$$ c(\eps):= \left(\frac{nK_2-4K_1-4K_1\eps}{(n-1)K_2}-\eps\right) .$$
Note that $c(\eps)>0$ for a sufficiently small $\eps >0$. Since $u$ has finite total energy, $f=|\grad u|$ has a finite $L^2$-norm. Therefore
$$ \liml_{R\to\infty}\Gamma(\eps, R)=0 $$
for any fixed $\eps>0$.

\noindent Choose a sufficiently small $\eps>0$. Then from inequalities \eqref{ehba} and \eqref{ehbon} it follows
\begin{align}
\frac{n-1}{n}\intl_M|A|^2f^2\phi^2+\Gamma(\eps, R)
&\geq c(\eps)\intl_M\phi^2|\grad f|^2\notag\\
&\geq\frac{c(\eps)C_s^{-1}}{\eta+1}\left(\intl_M(f\phi)^{\frac{2n}{n-2}}\right)^{\frac{n-2}{n}}-\frac{c(\eps)}{\eta}\intl_Mf^2|\grad \phi|^2 . \label{e25}
\end{align}
Using H\"{o}lder inequality, we get
\begin{align}
\intl_M|A|^2\phi^2f^2\leq\left(\intl_M|A|^n\right)^{\frac{2}{n}}\left(\intl_M(f\phi)^{\frac{2n}{n-2}}\right)^{\frac{n-2}{n}}. \label{ineq:Holder}
\end{align}
Combining these inequalities \eqref{e25} and \eqref{ineq:Holder} gives
$$ \Gamma(\eps, R)+\frac{c(\eps)}{\eta}\intl_Mf^2|\grad \phi|^2\geq\left(\frac{c(\eps)C_s^{-1}}{\eta+1}-\frac{n-1}{n}\left(\intl_M|A|^n\right)^{\frac{2}{n}} \right)\left(\intl_M(f\phi)^{\frac{2n}{n-2}}\right)^{\frac{n-2}{n}} .$$
Now choose a sufficiently small $\eta>0$. Then the assumption on the $L^n$-norm of the second fundamental form $A$ guarantees that
$$ \frac{c(\eps)C_s^{-1}}{\eta+1}-\frac{n-1}{n}\left(\intl_M|A|^n\right)^{\frac{2}{n}}>0 . $$
Letting $R\to\infty$, we obtain that $f=|\grad u|=0$, i.e., $u$ is constant. This is a contradiction. Hence we conclude that $M$ should have only one end.
\end{proof}

%%%%%%%%%%%%%%%%%%%%%%%%%%%%%%%%%%%%%%%%%%%%%%%%%%%%%%%
\section{Stability of minimal hypersurfaces}
%%%%%%%%%%%%%%%%%%%%%%%%%%%%%%%%%%%%%%%%%%%%%%%%%%%%%%%
In this section we prove that if the second fundamental form $|A|$ at each point or the total scalar curvature $\int_M |A|^n$ is sufficiently small in a complete minimal
hypersurface $M$ in a nonpositively curved Riemannian manifold, then $M$ must be stable.
\begin{theorem}
Let $N$ be $(n+1)$-dimensional Riemannian manifold with sectional curvature $K_N$ satisfying $K_N\leq K\leq 0$
where $K\leq 0$ is a constant. Let $M$ be a complete stable minimal hypersurface in $N$. If
$$|A|^2\leq -K\frac{(n+1)^2}{4}$$
then $M$ is stable.
\end{theorem}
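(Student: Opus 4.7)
The strategy is to verify the stability inequality directly, by combining the pointwise hypothesis on $|A|$ with the Bessa--Montenegro lower bound on $\lambda_1(M)$ recalled in the introduction. Recall that stability requires
$$\int_M |\nabla f|^2 \;\geq\; \int_M \bigl(|A|^2 + \overline{\mathrm{Ric}}(e_{n+1})\bigr)\, f^2$$
for every compactly supported Lipschitz $f$ on $M$, so it is enough to dominate the right-hand side by the Rayleigh quotient via $\lambda_1(M)$.

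First I would turn the sectional curvature bound $K_N \leq K$ into the pointwise Ricci bound $\overline{\mathrm{Ric}}(e_{n+1}) \leq nK$, and combine it with the hypothesis $|A|^2 \leq -K(n+1)^2/4$ to conclude
$$|A|^2 + \overline{\mathrm{Ric}}(e_{n+1}) \;\leq\; -K\frac{(n+1)^2}{4} + nK \;=\; -K\frac{(n-1)^2}{4},$$
a nonnegative constant since $K \leq 0$. Second, I would invoke the Bessa--Montenegro theorem quoted in the introduction (applied with $b = 0$, $a = \sqrt{-K}$, under the standing assumption that $N$ is simply connected, as in their hypothesis), which gives
$$\lambda_1(M) \;\geq\; -K\frac{(n-1)^2}{4}.$$

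With these two ingredients in hand, the conclusion follows at once: for any compactly supported Lipschitz $f$ on $M$,
$$\int_M \bigl(|A|^2 + \overline{\mathrm{Ric}}(e_{n+1})\bigr) f^2 \;\leq\; -K\frac{(n-1)^2}{4}\int_M f^2 \;\leq\; \lambda_1(M) \int_M f^2 \;\leq\; \int_M |\nabla f|^2,$$
which is the stability inequality. The degenerate case $K = 0$ is trivial, since the hypothesis then forces $|A| \equiv 0$ and $\overline{\mathrm{Ric}}(e_{n+1}) \leq 0$. I do not expect a genuine obstacle here: the essential arithmetic identity $(n+1)^2/4 - n = (n-1)^2/4$ is precisely what aligns the pointwise curvature bound with the first eigenvalue estimate. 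The only point that deserves explicit mention is the simply connected hypothesis on $N$ needed to apply Bessa--Montenegro; if one does not wish to impose it, the same conclusion follows by lifting $M$ to the universal cover of $N$, where both the pointwise $|A|^2$ bound and the stability inequality transfer back to $M$.
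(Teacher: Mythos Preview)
Your proposal is correct and follows essentially the same route as the paper: both combine the Bessa--Montenegro lower bound $\lambda_1(M)\geq -K(n-1)^2/4$ with the pointwise estimate $\overline{\mathrm{Ric}}(e_{n+1})\leq nK$ and the identity $(n+1)^2/4 - n = (n-1)^2/4$ to verify the stability inequality directly. Your remark about the simply connected hypothesis on $N$ (needed for Bessa--Montenegro) is a valid observation that the paper's statement leaves implicit.
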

\begin{proof}
From the result of Bessa-Montenegro \cite{BM}, we have
$$ -K\frac{(n-1)^2}{4}\leq\lambda_1(M)\leq\frac{\intl_M|\grad f|^2}{\intl_Mf^2} $$
for any compactly supported nonconstant Lipschitz function $f$ on $M$.

The assumption on sectional curvature on $N$ implies that $\over{\text{Ric}}(e_{n+1})\leq nK $. Hence
\begin{align*}
\intl_M \Big\{|\grad f|^2-(|A|^2+\over{\text{Ric}}(e_{n+1}))f^2\Big\}
&\geq \intl_M \left(-K\frac{(n-1)^2}{4}-|A|^2-nK\right)f^2\\
&\geq \intl_M \left(-K\frac{(n-1)^2}{4}+K\frac{(n+1)^2}{4}-nK\right)f^2\\
&=0 ,
\end{align*}
which means that $M$ is stable.
\end{proof}

\begin{thm} \label{thm:sufficient condition 2}
Let $N$ be $(n+1)$-dimensional Riemannian manifold with sectional curvature $K_N$ satisfying $K_N\leq K\leq 0$
where $K \leq 0$ is a constant. Let $M$ be a complete stable minimal hypersurface in $N$. Assume that
$$\intl_M |A|^n  \leq \Big(\frac{1}{C_s}\Big)^{\frac{n}{2}} .$$ Then $M$ is stable.
\end{thm}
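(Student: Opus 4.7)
The plan is to verify the stability inequality directly by chaining H\"older's inequality with the Hoffman--Spruck Sobolev inequality, and using the ambient curvature bound to absorb the Ricci term. (I read the hypothesis as ``complete minimal hypersurface,'' since otherwise the conclusion would be vacuous.) First, I would observe that because $K_N \leq K \leq 0$, the normal Ricci curvature satisfies $\overline{\mathrm{Ric}}(e_{n+1}) \leq nK \leq 0$. Consequently, for any compactly supported Lipschitz function $f$ on $M$,
$$\int_M \bigl(|A|^2 + \overline{\mathrm{Ric}}(e_{n+1})\bigr)f^2 \leq \int_M |A|^2 f^2,$$
so it suffices to prove the inequality $\int_M |\nabla f|^2 \geq \int_M |A|^2 f^2$.

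Next, I would apply H\"older's inequality with conjugate exponents $n/2$ and $n/(n-2)$ to get
$$\int_M |A|^2 f^2 \leq \left(\int_M |A|^n\right)^{\!2/n}\left(\int_M f^{2n/(n-2)}\right)^{\!(n-2)/n}.$$
Since $N$ has nonpositive sectional curvature and $M$ is a minimal hypersurface, the Hoffman--Spruck Sobolev inequality (the lemma of Section~3) applies and yields
$$\left(\int_M f^{2n/(n-2)}\right)^{\!(n-2)/n} \leq C_s \int_M |\nabla f|^2.$$
Combining these two estimates and invoking the hypothesis $\int_M |A|^n \leq (1/C_s)^{n/2}$, I obtain
$$\int_M |A|^2 f^2 \leq C_s\left(\int_M |A|^n\right)^{\!2/n}\int_M |\nabla f|^2 \leq \int_M |\nabla f|^2,$$
which is exactly the stability inequality in view of the first step.

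In short, the argument is a two-line combination of H\"older and Sobolev, with the threshold $(1/C_s)^{n/2}$ chosen precisely so that the product of constants equals one. I do not anticipate a genuine obstacle: the only items to check are that one is really allowed to discard the $\overline{\mathrm{Ric}}$ term (which requires $K\leq 0$, as assumed), and that the Hoffman--Spruck inequality is available, which needs the standing assumption $n\geq 3$ carried implicitly from Section~3.
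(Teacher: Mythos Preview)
Your argument is correct and is essentially identical to the paper's: both drop the $\overline{\mathrm{Ric}}(e_{n+1})\leq nK\leq 0$ term, apply H\"older with exponents $n/2$ and $n/(n-2)$, and then the Hoffman--Spruck Sobolev inequality so that the hypothesis $\bigl(\int_M|A|^n\bigr)^{2/n}\leq 1/C_s$ closes the stability inequality. Your parenthetical remarks (the redundant ``stable'' in the hypothesis and the implicit $n\geq 3$) are also apt.
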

\begin{proof}
It suffices to show that
\begin{eqnarray*}
\intl_M \Big\{|\grad f|^2-(|A|^2+\over{\text{Ric}}(e_{n+1}))f^2\Big\} \geq 0
\end{eqnarray*}
for all compactly supported Lipschitz function $f$. Using the Sobolev
inequality (\ref{ineq:Sobolev}), we have
\begin{align} \label{ineq:1}
\intl_M \Big\{|\grad f|^2-(|A|^2+\over{\text{Ric}}(e_{n+1}))f^2\Big\} &\geq \frac{1}{C_s}
\Big(\intl_M |f|^{\frac{2n}{n-2}} \Big)^{\frac{n-2}{n}} - \intl_M
(|A|^2+\over{\text{Ric}}(e_{n+1}))f^2   \nonumber \\
&\geq \frac{1}{C_s}
\Big(\intl_M |f|^{\frac{2n}{n-2}} \Big)^{\frac{n-2}{n}} - \intl_M
|A|^2f^2 ,
\end{align}
where we used the fact that $\over{\text{Ric}}(e_{n+1})\leq nK \leq 0$ in the last inequality.

\noindent
On the other hand, applying H\"{o}lder inequality, we get
\begin{eqnarray} \label{ineq:2}
\intl_M |A|^2 f^2 dv \leq \Big(\intl_M |A|^n  \Big)^{\frac{2}{n}}
\Big(\intl_M |f|^{\frac{2n}{n-2}} \Big)^{\frac{n-2}{n}}.
\end{eqnarray}
Combining (\ref{ineq:1}) with (\ref{ineq:2}) we have
\begin{align*}
\intl_M \Big\{|\grad f|^2-(|A|^2+\over{\text{Ric}}(e_{n+1}))f^2\Big\}&\geq
\Big\{\frac{1}{C_s} - \Big(\intl_M |A|^n \Big)^{\frac{2}{n}}\Big\} \Big(\intl_M |f|^{\frac{2n}{n-2}} \Big)^{\frac{n-2}{n}} \\
 &\geq 0 ,
\end{align*}
which completes the proof.
\end{proof}

\vskip 0.3cm
\noindent
{\bf Acknowledgment: }The first author would like to express his gratitude to NSC, Taiwan for supporting him three year scholarship at National Tsinghua University.
%%%%%%%%%%%%%%%%%%%%
%%%%%%%%%%%%%%%%%%%%
%%%%%%%%%%%%%%%%%%%%
%\addcontentsline{toc}{section}{3 \hspace{0.09cm} References}

\vskip 1cm
\noindent Nguyen Thac Dung\\
Department of Mathematics, National Tsinghua University, No. 101, Sec. 2, Kuangfu Road, Hsinchu, Taiwan, R.O.C.\\
{\tt E-mail:dungmath@yahoo.co.uk}\\

\bigskip
\noindent Keomkyo Seo\\
Department of Mathematics, Sookmyung Women's University, Hyochangwongil 52, Yongsan-ku, Seoul, 140-742, Korea\\
{\tt E-mail:kseo@sookmyung.ac.kr}\\
\end{document}